\title{Distribution mod $p$ of Euler's totient and the sum of proper divisors}
\author{Noah Lebowitz-Lockard}
\address{8330 Millman St., Philadelphia, PA 19118}
\email{nlebowi@gmail.com}
\author{Paul Pollack}
\address{Department of Mathematics \\ University of Georgia \\ Athens, GA 30602}
\email{pollack@uga.edu}
\thanks{The second author (P.P.) is supported by NSF award DMS-2001581.}
\subjclass{Primary 11A25; Secondary 11N36, 11N64}
\author{Akash Singha Roy}
\address{ESIC Staff Quarters No.: D2\\ 143 Sterling Road, Nungambakkam\\Chennai 600034\\ Tamil Nadu, India.}
\email{akash01s.roy@gmail.com}
\renewcommand\phi\varphi
\renewcommand{\pod}[1]{\allowbreak\mathchoice
  {\if@display \mkern 18mu\else \mkern 8mu\fi (#1)}
  {\if@display \mkern 18mu\else \mkern 8mu\fi (#1)}
  {\mkern4mu(#1)}
  {\mkern4mu(#1)}
}
\DeclareMathAlphabet{\curly}{U}{rsfs}{m}{n}
\newcommand\Z{\mathbb{Z}}
\newtheorem{thm}{Theorem}[section]
\newtheorem{prop}[thm]{Proposition}
\newtheorem{lem}[thm]{Lemma}
\theoremstyle{remark}
\begin{document}

\begin{abstract}
We consider the distribution in residue classes modulo primes $p$ of Euler's totient function $\phi(n)$ and the sum-of-proper-divisors function $s(n):=\sigma(n)-n$. We prove that the values $\phi(n)$, for $n\le x$, that are coprime to $p$ are asymptotically uniformly distributed among the $p-1$ coprime residue classes modulo $p$, uniformly for $5 \le p \le (\log{x})^A$ (with $A$ fixed but arbitrary). We also show that the values of $s(n)$, for $n$ composite, are uniformly distributed among all $p$ residue classes modulo every $p\le (\log{x})^A$. These appear to be the first results of their kind where the modulus is allowed to grow substantially with $x$.
\end{abstract}
\maketitle

\section{Introduction}
Let $\phi(n)$ denote \textsf{Euler's totient} and let $s(n)=\sigma(n)-n$ denote the \textsf{sum-of-proper-divisors} (or \textsf{sum-of-aliquot-divisors}) function. In this paper, we determine asymptotic formulas for the number of $n\le x$ for which $\phi(n)$, or $s(n)$, land in a given residue class modulo $p$, uniformly for primes $p$ below any fixed power of $\log{x}$.

For the Euler function, the distribution mod $p$ for fixed $p$ can be read out of known results. Since $\phi(n)$ is even for all $n\ge 3$, one should assume $p$ is odd. Using Wirsing's mean value theorem in \cite{wirsing61}, it is straightforward to prove that the number of $n \le x$ with $\phi(n)$ coprime to $p$ is 
\[ \sim C_p x/(\log{x})^{1/(p-1)},\quad\text{as $x\to\infty$}, \]
for a certain positive constant $C_p$.
(An early reference for this formula is \cite{scourfield64}. See  \cite{SW06} and \cite{FLM14} for more precise results.) In particular, $\phi(n)\equiv 0\pmod{p}$ for $(1+o(1))x$ values of $n\le x$. What about the coprime residue classes? When $p=3$, Dence and Pomerance \cite{DP98} present explicit positive constants $C_{3,1} \approx 0.61$ and $C_{3,2} \approx 0.33$ such that the number of $n\le x$ with $\phi(n)\equiv a\pmod{3}$ is $\sim C_{3,a} x/(\log{x})^{1/2}$, for $a=1,2$. When $p\ge 5$, it follows from work of Narkiewicz (see \cite[Corollary 2]{narkiewicz66} or \cite[Chapter 5]{narkiewicz84}; see also \cite{shirokov83}) that the values of $\phi(n)$ coprime to $p$ are uniformly distributed among the $p-1$ coprime residue classes mod $p$.\footnote{In fact, Narkiewicz shows that if $m$ is coprime to $6$, then the values of $\phi(n)$ that are coprime to $m$ are uniformly distributed among the $\phi(m)$ coprime residue classes modulo $m$.} Hence, for each $a$ coprime to $p$, there are $\sim C_p (p-1)^{-1} x/(\log{x})^{1/(p-1)}$ values of $n\le x$ with $\phi(n)\equiv a\pmod{p}$. 

Our first theorem shows, in a precise form, that uniform distribution over coprime residue classes mod $p$ continues to hold for  $p \le (\log{x})^A$.

\begin{thm}\label{thm:mainphi} Fix $A>0$. Let $x$ and $p$ tend to infinity with $p \le (\log{x})^A$. The number of $n\le x$ with $\phi(n)\equiv a\pmod{p}$ is 
\[ \sim \frac{x}{p (\log{x})^{1/(p-1)}}, \]
uniformly in the choice of coprime residue class $a\bmod{p}$.
\end{thm}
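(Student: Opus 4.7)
The plan is to reduce $N(x,a;p) := \#\{n\le x : \phi(n)\equiv a\pmod p\}$ to sums of a multiplicative function via character orthogonality, then apply the Landau--Selberg--Delange (LSD) method. Specifically, for $(a,p)=1$,
\[
N(x,a;p) = \frac{1}{p-1}\sum_{\chi \bmod p}\bar\chi(a)\,S_\chi(x),\qquad S_\chi(x) := \sum_{n\le x}\chi(\phi(n)),
\]
where each $\chi$ is extended by $0$ on multiples of $p$. The principal character contributes $\frac{1}{p-1}\#\{n\le x : (\phi(n),p)=1\}$, which by the Wirsing-type argument recalled in the introduction is $\frac{C_p}{p-1}\cdot\frac{x}{(\log x)^{1/(p-1)}}(1+o(1))$ for a constant $C_p$, and I would plan to verify that $C_p\sim (p-1)/p$ as $p\to\infty$ so that this reads as $\frac{x}{p(\log x)^{1/(p-1)}}(1+o(1))$, matching the statement.

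The substantive work is bounding $S_\chi(x)$ for $\chi\ne\chi_0$. Since $n\mapsto\chi(\phi(n))$ is multiplicative, its Dirichlet series $F_\chi(s)$ has an Euler product whose local factor at $q\ne p$ equals $1+\chi(q-1)q^{-s}/(1-\chi(q)q^{-s})$. A short computation, using Siegel--Walfisz to evaluate the sums $\sum_{q\equiv r\pmod p}q^{-s}$ uniformly for $p\le(\log x)^A$, gives
\[
\log F_\chi(s) = \frac{1}{p-1}\Big(\sum_{r=2}^{p-1}\chi(r-1)\Big)\log\frac{1}{s-1}+O(1) = -\frac{\chi(-1)}{p-1}\log\frac{1}{s-1}+O(1)
\]
as $s\to 1^+$, where the last equality uses $\sum_{r=1}^{p-1}\chi(r)=0$ for $\chi\ne\chi_0$. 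So $F_\chi(s)\sim (s-1)^{\chi(-1)/(p-1)}$ near $s=1$: a small zero of order $1/(p-1)$ when $\chi$ is even, and a small pole of the same order when $\chi$ is odd. Applying LSD to $F_\chi$ then gives $|S_\chi(x)|\ll x/(\log x)^{1+\chi(-1)/(p-1)}$, a factor of at least $(\log x)^{-(p-3)/(p-1)}$ smaller than the main term in the worst (odd) case. For $p\ge 5$ this is a genuine positive-power saving, so even after summing the $p-2$ non-principal contributions and dividing by $p-1$ the cumulative error remains $o\!\left(\tfrac{x}{p(\log x)^{1/(p-1)}}\right)$.

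The real obstacle, and the reason the theorem goes beyond classical fixed-$p$ results, is ensuring \emph{full uniformity} in $p$ throughout the range $p\le(\log x)^A$. Two delicate points stand out. First, the LSD contour integration needs bounds on $F_\chi(s)$ just to the left of $\mathrm{Re}(s)=1$, which require classical zero-free regions for the $L$-functions mod $p$ plus Siegel--Walfisz, with any potential Siegel zero handled by the standard dichotomy. Second, the leading constants $G_\chi(1)$ that LSD produces can grow with $p$ (they involve $L(1,\psi)$ for Dirichlet characters $\psi\bmod p$), and one must balance that growth against the $(\log x)^{-(p-3)/(p-1)}$ saving to keep things $o(1)$ beyond the main term --- the constraint $p\le(\log x)^A$ is precisely what makes this possible. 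A parallel balancing is needed for the constant $C_p$: establishing $C_p\sim (p-1)/p$ requires a careful singular expansion of $F_{\chi_0}(s)=(1-p^{-2s})\zeta(s)\prod_{q\equiv 1\pmod p}(1-q^{-s})$ at $s=1$, where the product over primes $\equiv 1\pmod p$ is evaluated through $L(1,\psi)$ values for $\psi\bmod p$.
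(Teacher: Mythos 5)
Your proposal is essentially the Landau--Selberg--Delange (LSD) half of the paper's argument, but there is a genuine gap: the LSD method cannot reach anywhere near the full range $p \le (\log x)^A$. The obstacle is exactly the one you flag at the end but then dismiss too quickly. When you write $F_\chi(s) = \zeta(s)^{-\chi(-1)/(p-1)} G_\chi(s)$, the ``constant'' $G_\chi(s)$ is built from a product $\prod_{\rho} L(s,\rho)^{a_\rho}$ over Dirichlet characters $\rho$ mod $p$, where the exponents $a_\rho = \frac{1}{p-1}\sum_{m}\bar\rho(m)\chi(m-1)$ are essentially normalized Jacobi sums. Even using the optimal bound $|a_\rho|\le \sqrt{p}/(p-1)$ (Weil/Jacobi-sum estimate, which you do not invoke but would need), one has $\sum_\rho |a_\rho| \asymp \sqrt{p}$, and the resulting bound on $G_\chi$ in the relevant region is of size $\exp\bigl(O(\sqrt{p}\log p)\bigr)$. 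For the LSD error to be dominated by the positive-power-of-$\log x$ saving you identify, one therefore needs $\sqrt{p}\log p = o(\log\log x)$, i.e.\ $p$ bounded by roughly $(\log\log x)^{2}$. This is far short of $(\log x)^A$. The quantitative version of LSD of Chang--Martin that the paper uses makes this precise: the hypothesis is $x\ge \exp(16/c_0)$ with $c_0 \asymp p^{-1/2}(\log p)^{-2}$, and the error carries a factor $M\asymp (C\sqrt{p})^{C\sqrt p}$. So the constraint ``$p\le (\log x)^A$'' is not what makes your balancing work; it is what is needed elsewhere.

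The missing idea is a second, non-analytic argument covering the range where $p/\log\log x \to\infty$. The paper does this (Propositions 3.3 and 3.4) by the ``anatomy of integers'' device of writing each $n>1$ as $n=mP$ with $P = P^+(n)$ the largest prime factor, observing that after discarding a negligible set one may assume $P\nmid m$ and $P>\exp(\sqrt{\log x})$, so that $\phi(n)=(P-1)\phi(m)$ and the condition $\phi(n)\equiv a\pmod p$ pins $P$ to a single coprime class mod $p$. Applying Siegel--Walfisz to the inner sum over $P$ in an arithmetic progression (this is where $p\le (\log x)^A$ is actually used) and summing over $m$ gives a ``fundamental relation'' expressing the count as $\frac{1}{p-1}\cdot(\text{number of } m,P \text{ with } \phi(m)\not\equiv 0,-a\pmod p)+o(x/p)$. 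The upper bound is then trivial, and the lower bound reduces to showing $\phi(m)\equiv 0$ and $\phi(m)\equiv -a \pmod p$ each occur for $o(x)$ pairs, which is handled by Lemma 3.1 and a self-referential application of the already-proved upper bound. Combining the two ranges --- LSD for $p\le (\log\log x)^{2-\delta}$ and the anatomical argument for $p/\log\log x\to\infty$ --- covers $p\le(\log x)^A$, precisely because they overlap. Your write-up needs both the Jacobi-sum bound on the $a_\rho$ (to make the LSD range reach past $\log\log x$) and this second method to close the gap.

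One smaller correction: the singular analysis near $s=1$ is done by comparing $F_\chi(s)$ to $\prod_\rho L(s,\rho)^{a_\rho}$, not by ``using Siegel--Walfisz to evaluate $\sum_{q\equiv r\pmod p} q^{-s}$''; Siegel--Walfisz enters only through the zero-free region and size bounds for the $L$-functions themselves.
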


Within its range of validity, Theorem \ref{thm:mainphi} improves  earlier estimates of Banks and Shparlinski (see Theorems 3.1 and 3.2 in \cite{BS04}). 

When $p=o(\log\log x)$, Theorem \ref{thm:mainphi} implies that $p\mid \phi(n)$ for $(1+o(1))x$ values of $n\le x$ (as found already in \cite{EGPS90}; see inequality (4.2) there), while when $p \asymp \log\log x$, Theorem \ref{thm:mainphi} shows that $p\mid \phi(n)$ for $\sim (1-\kappa) x$ values of $n \le x$, where $\kappa = \exp(\frac{-\log\log {x}}{p-1})$. Since $1-\exp(-\log\log x/(p-1)) \sim \log\log{x}/p$ once $\log\log x = o(p)$, it seems reasonable to suspect that $p\mid \phi(n)$ for $\sim x\log\log x/p$ values of $n\le x$ when $p/\log\log{x}\to\infty$. Our next theorem substantiates this, when $p\le (\log{x})^A$.

\begin{thm}\label{thm:secondphi} Fix $A>0$. Suppose that $x$ and $p/\log\log{x}$ tend to infinity, with $p \le (\log{x})^A$. The number of $n\le x$ for which $p\mid \phi(n)$ is $(1+o(1))\frac{x\log\log{x}}{p}$.
\end{thm}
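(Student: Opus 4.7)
The plan is to reduce the problem to counting $n \le x$ with at least one prime factor $q \equiv 1 \pmod{p}$, and then to evaluate this count by a Mertens-style estimate for primes in the residue class $1 \bmod p$.

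\emph{Reduction.} First I would observe that, since $\phi(n) = \prod_{q^{a} \| n} q^{a-1}(q-1)$, the divisibility $p \mid \phi(n)$ holds precisely when either $p^{2} \mid n$ or $n$ has a prime divisor $q \equiv 1 \pmod{p}$. The count of $n \le x$ with $p^{2} \mid n$ is at most $x/p^{2}$, which is $o(x\log\log x/p)$ since $p/\log\log x \to \infty$, so such $n$ contribute negligibly to both sides.

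\emph{Sieve step.} Let $T$ denote the number of $n \le x$ having a prime factor $q \equiv 1 \pmod{p}$, and set $S := \sum_{q \le x,\, q \equiv 1 \pmod{p}} 1/q$. A Bonferroni argument will give
\[
T = xS + O(xS^{2}) + O(\pi(x; p, 1)),
\]
and the Brun--Titchmarsh bound $\pi(x; p, 1) \ll x/(p \log(x/p))$ renders the last error $o(x\log\log x/p)$. Since $p/\log\log x \to \infty$ will force $S \to 0$, the quadratic term $xS^{2}$ is absorbed into $o(xS)$, and so everything reduces to showing $S = (1+o(1))\log\log x/p$.

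\emph{Mertens in the progression $1 \bmod p$.} To estimate $S$, I would split at $y_{0} := \exp((\log p)^{2})$. For $q \le y_{0}$, Brun--Titchmarsh combined with partial summation (treating the tiny range $q \le 2p$ by the trivial bound $1/q < 1/p$) should bound this contribution by $O(\log\log p/(p-1))$. For $y_{0} < q \le x$, the Siegel--Walfisz theorem applies uniformly when $p \le (\log x)^{A}$, yielding $\pi(t; p, 1) = \mathrm{Li}(t)/(p-1) + O(t\exp(-c\sqrt{\log t}))$; partial summation then extracts the main contribution $(\log\log x - 2\log\log p)/(p-1)$, with an additional error of size $O(p^{-c})$ from the Siegel--Walfisz remainder. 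Since $p \le (\log x)^{A}$ forces $\log\log p = o(\log\log x)$, combining the two ranges gives $S = (1+o(1))\log\log x/(p-1)$, and hence $T \sim x\log\log x/p$, as desired.

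\emph{Main obstacle.} The crux will be the uniform Mertens-type estimate for $\sum_{q \le x,\, q \equiv 1 \pmod{p}} 1/q$ as $p$ is allowed to grow up to $(\log x)^{A}$. No single ingredient suffices: Siegel--Walfisz controls primes above $y_{0}$ but is useless for small $q$ (in particular, the partial-summation integral has a logarithmic singularity at $t = p$), so a separate Brun--Titchmarsh argument is required to show that the contribution of $q \le y_{0}$ stays within the acceptable $o(\log\log x/p)$ error.
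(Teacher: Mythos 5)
Your overall strategy matches the paper's: reduce to counting $n\le x$ with a prime factor $q\equiv 1\pmod p$ (the $p^2\mid n$ case being negligible), run a two-term Bonferroni argument, and thereby reduce everything to the Mertens-type estimate $S := \sum_{q\le x,\, q\equiv 1\pmod p}1/q \sim \log_2 x/p$. The reduction and Bonferroni steps are fine and are exactly what the paper does.

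The gap is in your proof of the Mertens estimate, specifically in the choice of the splitting point $y_0 = \exp((\log p)^2)$. You assert that for $y_0 < q \le x$ ``the Siegel--Walfisz theorem applies uniformly when $p \le (\log x)^A$.'' But Siegel--Walfisz requires the modulus to satisfy $p \le (\log t)^B$ for a \emph{fixed} exponent $B$ at the point $t$ where it is applied, not merely $p\le(\log x)^A$. At $t$ near your cutoff $y_0$, we have $\log t \approx (\log p)^2$, so $(\log t)^B = (\log p)^{2B}$, which is astronomically smaller than $p$. Thus Siegel--Walfisz gives you nothing in the range $\exp((\log p)^2) < t < \exp(p^{1/B})$, and that entire stretch of $S$ is uncontrolled. (Relatedly, even if you grant the $O(t\exp(-c\sqrt{\log t}))$ error at $t = y_0$, the resulting error is $\asymp p^{-c}\log p$, which is \emph{not} $o(\log_2 x/p)$ once $p$ is a fixed power of $\log x$ unless $c\ge 1$; the Siegel--Walfisz constant $c$ is far smaller than $1$.)

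The fix is the one used in the paper: split at $\exp(p^{1/K})$ for a \emph{large fixed} $K$ (with $K \ge A$), so that $p = (\log t)^K$ at the cutoff and Siegel--Walfisz with parameter $K$ is legitimately applicable throughout $[\exp(p^{1/K}), x]$. The Brun--Titchmarsh contribution from $q \le \exp(p^{1/K})$ is then $O(\log p/(Kp))$, which is at most $O\bigl((A/K)\log_2 x/p\bigr)$; this is not $o(\log_2 x/p)$ for fixed $K$, but one recovers the asymptotic by letting $K\to\infty$ at the end. This variable-cutoff-plus-take-$K$-large device is the essential ingredient you are missing. Your small-prime bound $O(\log\log p/p)$ and the computation that it is $o(\log_2 x/p)$ are correct, but they are attached to a cutoff for which the large-prime side cannot be handled.
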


We turn now to $s(n)$. For fixed $p$, one has that $p\mid \sigma(n)$ for all $n$ except those belonging to a set of density $0$. This was observed already by Alaoglu and Erd\H{o}s in 1944 \cite[p.\ 882]{AE44}. (See also the proof of Lemma 5 in \cite{PP13}, and Theorem 2 in \cite{pomerance77}.) Since $s(n)=\sigma(n)-n \equiv -n\pmod{p}$ whenever $p\mid \sigma(n)$, we immediately deduce that $s(n)$ is equidistributed mod $p$ for each fixed $p$. 

We will show that $s(n)$ remains equidistributed for larger $p$, but some care about the formulation is required. Since $s(q)=1$ for every prime $q$, there are at least $(1+o(1))x/\log{x}$ values of $n\le x$ with $s(n)\equiv 1\pmod{p}$, no matter the value of $p$. And this dashes any hope of equidistribution if $p$ is appreciably larger than $\log{x}$. We work around this issue by considering $s(n)$ only for composite $n$.

\begin{thm}\label{thm:mains} Fix $A>0$. As $x\to\infty$, the number of composite $n\le x$ with $s(n)\equiv a\pmod{p}$ is $(1+o(1))x/p$, for every residue class $a$ mod $p$ with $p \le (\log{x})^A$.
\end{thm}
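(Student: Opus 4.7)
The plan is to factor a typical composite $n\le x$ as $n=qm$ with $q=P(n)$ the largest prime factor, and apply the Siegel--Walfisz theorem to primes $q$ in arithmetic progressions modulo $p$ (uniformly for $p\le(\log x)^A$). Set a threshold $y=y(x)$ (say $y=\exp((\log\log x)^2)$) chosen so that the ``atypical'' composite $n\le x$---those failing $P(n)>y$ or $P(n)^2\nmid n$---number $o(x/p)$, uniformly in $p\le(\log x)^A$. For typical $n$, write $n=qm$ with $q=P(n)$ prime, $q\nmid m$, and $q>P(m)$. Since $\gcd(q,m)=1$,
\[ s(n)=(q+1)\sigma(m)-qm=q\cdot s(m)+\sigma(m), \]
so $s(n)\equiv a\pmod{p}$ is equivalent to $q\cdot s(m)\equiv a-\sigma(m)\pmod{p}$, a linear congruence in $q$.

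Next, split by whether $p\mid s(m)$. In the \emph{typical subcase} $p\nmid s(m)$, the congruence pins $q$ to a unique reduced residue class $r(m)\pmod p$. Siegel--Walfisz (uniformly valid for $p\le(\log x)^A$) then yields
\[ \#\{q\text{ prime}: \max(y,P(m))<q\le x/m,\,q\equiv r(m)\pmod{p}\} = \frac{\pi(x/m)-\pi(\max(y,P(m)))}{p-1} + O\bigl((x/m)\,\mathrm{e}^{-c\sqrt{\log(x/m)}}\bigr). \]
Summing over $m$ gives a main term of $(1+o(1))x/(p-1)=(1+o(1))x/p$, since $\sum_m(\pi(x/m)-\pi(\max(y,P(m))))$ equals the number of typical composite $n\le x$, which is $(1+o(1))x$; the cumulative Siegel--Walfisz error is $o(x/p)$ for a suitable choice of $y$. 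In the \emph{exceptional subcase} $p\mid s(m)$, the congruence reduces to $\sigma(m)\equiv a\pmod p$, and combined with $\sigma(m)\equiv m\pmod p$ this forces $m\equiv a\pmod p$. The resulting exceptional contribution must be shown to be $o(x/p)$ uniformly in $a$.

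This exceptional bound is the main obstacle: the event $p\mid s(m)$ is precisely what the theorem describes for $a=0$, so the theorem cannot be used here to prove itself. To escape the circularity, I would use a two-regime split. For $p=o(\log\log x)$, an Alaoglu--Erd\H{o}s sieve shows $\#\{n\le x:p\nmid\sigma(n)\}=o(x/p)$, which reduces $s(n)\equiv a$ to $n\equiv -a\pmod p$ for almost all $n$ and gives equidistribution directly. For $p\gg\log\log x$, iterate the factorization by peeling off a second prime $r=P(m)$ from $m$, converting $p\mid s(m)$ into a linear congruence on $r\bmod p$ (which another application of Siegel--Walfisz handles, yielding a further $1/(p-1)$ saving) plus a doubly exceptional event ($p\mid m'$ and $p\mid\sigma(m')$, where $m=rm'$) that is negligible because $p\mid\sigma(\cdot)$ is rare in this regime. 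Making all the error terms compound to $o(x/p)$ uniformly across $p\le(\log x)^A$ is the most technically demanding step.
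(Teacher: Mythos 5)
Your decomposition of the ``typical'' part matches the paper's: factor $n=mP$ with $P=P^+(n)$, note $s(n)=Ps(m)+\sigma(m)$, and apply Siegel--Walfisz to the resulting congruence in $P$ modulo $p$. You also correctly identify the central difficulty: the exceptional set where $p\mid s(m)$ (and $\sigma(m)\equiv a$) threatens circularity, and you cannot simply invoke the theorem being proved. Your proposal to iterate the factorization for large $p$, peeling off a second prime, is in the same spirit as what the paper does (it writes $m=m_0P_1P_2$ and uses the two congruences $(P_1+1)(P_2+1)\equiv\sigma(m_0)^{-1}a$ and $P_1P_2\equiv m_0^{-1}a$ to pin down $(P_1,P_2)\bmod p$ up to $O(1)$ choices).

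However, your two-regime split leaves a genuine gap that the paper works hard to close. The Alaoglu--Erd\H{o}s sieve gives $\#\{n\le x: p\nmid\sigma(n)\}\asymp x/(\log x)^{1/(p-1)}$, and this is $o(x/p)$ only when $(\log x)^{1/(p-1)}/p\to\infty$, i.e.\ when $p\log p=o(\log_2 x)$, so roughly $p\lesssim \log_2 x/\log_3 x$ --- \emph{not} all $p=o(\log_2 x)$ as you assert. Meanwhile, the anatomical argument on the other side genuinely needs $p/\log_2 x\to\infty$: for instance, in the case $p\mid a$ one is reduced to $n$ divisible by $p^r q^e$ with $p\mid\sigma(q^e)$, whose count is $\ll (x/p)(\log_2 x/p+\cdots)$, and this is $o(x/p)$ only when $\log_2 x/p\to 0$. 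So the range $\log_2 x/\log_3 x\lesssim p\ll\log_2 x$ (including $p\asymp\log_2 x$) is covered by neither of your regimes. The paper bridges exactly this gap with the Landau--Selberg--Delange machinery (Lemmas \ref{lem:sigmaasymp} and \ref{lem:SDs}), proving equidistribution of the pair $(n\bmod p,\sigma(n)\bmod p)$ for $p\le(\log_2 x)^{2-\delta}$ --- and that range is achievable only because the coefficients $a_\rho$ arising there are Jacobi sums, hence bounded by $\sqrt{p}/(p-1)$; the trivial bound would again strand the analytic method short of $\log_2 x$. Without some substitute for this analytic step, your plan does not cover all $p\le(\log x)^A$.
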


The proofs of Theorem \ref{thm:mainphi} and \ref{thm:mains} combine two different methods. For small $p$, meaning $p$ smaller than roughly $(\log\log x)^2$, we apply the analytic method of Landau--Selberg--Delange. In the (partially overlapping) range when $p$ is a bit larger than $\log\log{x}$, we apply a combinatorial and ``anatomical''\footnote{in the sense of ``anatomy of integers''} method of Banks--Harman--Shparlinski \cite{BHS05}. While similar analytic methods have been applied in such problems before (such as in the work of Narkiewicz mentioned above), the modulus was always fixed. To allow $p$ to grow with $x$, we apply a version of the Landau--Selberg--Delange method enunciated recently by Chang and Martin \cite{CM20}. Interestingly, this part of the argument uses crucially that nontrivial Jacobi sums over $\mathbb{F}_p$ are bounded by $\sqrt{p}$ in absolute value; the trivial bound of $p$ would only allow the method to work for $p$ up to about $\log\log{x}$, just shy of what is required for overlap with our second range.\footnote{For a distinct but related application of these kinds of character sum bounds, see \cite[Chapter 6]{narkiewicz84}. There Weil's bounds are used to prove that certain ``polynomial-like'' multiplicative functions are uniformly distributed in coprime residue classes mod $p$ for all large enough $p$. See also \cite[Theorem 2]{narkiewicz82}.}

Given our reliance on the Siegel--Walfisz theorem, it would seem difficult to extend  uniformity in our results past $(\log{x})^A$. It would be interesting to have heuristics suggesting the ``correct'' range of uniformity to expect. For $s$, uniformity in Theorem \ref{thm:mains} certainly fails as soon as $p$ is a bit larger than $x^{1/2}$. To see this, let $
q,r$ run over primes up to $\frac{1}{3} {\sqrt{x}}$. Then each product $qr \le x$ and $s(qr)=q+r+1 < \sqrt{x}$. Hence, some $m < \sqrt{x}$ has $\gg x^{1/2} (\log{x})^{-2}$  preimages $n=qr\le x$. If now $p \ge x^{1/2} (\log{x})^3$ (say), then the residue class $m\bmod{p}$ contains $s(n)$ for many more than $x/p$ composite $n\le x$.  For $\phi$, a similar argument suggests we should not expect uniformity in Theorem \ref{thm:mainphi} past roughly
\[ L(x) := \exp(\log{x} \cdot \log\log\log{x}/\log\log{x}). \] Indeed, fix $\delta>0$. It was shown by Pomerance --- conditional on a plausible conjecture about shifted primes $q-1$ with no large prime factors --- that for all large $x$, there is an integer $m\le x$ having all prime factors at most $\log{x}$ and possessing at least $x/L(x)^{1+\delta}$ $\phi$-preimages $n\le x$ \cite{pomerance80}. Then if $p \ge L(x)^{1+2\delta}$, the coprime residue class $m\bmod{p}$ contains $\phi(n)$ for many more than $x/p$ values of $n\le x$. 

The reader interested in other work on the distribution of $\phi$ and $s$ in residue classes is referred to \cite{FKP99,BS06,BL07,FS07,BBS08,FL08,BSS09,CG09, garaev09,LPZ11,narkiewicz12,pollack14}.

\subsection*{Notation and conventions} We reserve the letters $p, q, P$ for primes. We write $\log_k$ for the $k$th iterate of the natural logarithm. In addition to employing the Landau--Bachmann--Vinogradov notation from asymptotic analysis, we write $A\gtrsim B$ (resp., $A\lesssim B$) to mean that $A \ge (1+o(1))B$ (resp., $A \le (1+o(1))B$). Constants implied by $O(.)$ or $\ll, \gg$ are absolute unless otherwise  specified.


\section{Preparation}
In this section we collect various results from the literature that will be required in the sequel.  
Let $P^{+}(n)$ denote the largest prime factor of the positive integer $n$, with the convention that $P^{+}(1)=1$. We say that $n$ is \textsf{$Y$-smooth} (or \textsf{$Y$-friable}) if $P^{+}(n)\le Y$.  
For each pair of real numbers $X, Y\ge 1$, we let $$\psi(X,Y) = \#\{n\le X: P^{+}(n)\le Y\},$$  so that $\psi(X,Y)$
gives the count of $Y$-smooth numbers not exceeding $X$. The following estimate is a consequence of the Corollary on p.\ 15 of \cite{CEP83}. 

\begin{lem}\label{lem:smooth} Suppose $X\ge Y \ge 3$, and let $u:=\frac{\log{X}}{\log{Y}}$. Whenever $u\to\infty$ and $X\ge Y \ge (\log{X})^2$, we have
\[ \psi(X,Y) = X \exp(-(1+o(1)) u\log{u}). \]
\end{lem}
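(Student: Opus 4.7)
The plan is to derive the lemma directly from the cited Corollary in \cite{CEP83}. That result supplies, uniformly for $u$ in a range containing all $3 \le u \le Y^{1-\epsilon}$ (any fixed $\epsilon>0$), an asymptotic of the form
\[ \psi(X,Y) = X \exp\!\left(-u\left(\log u + \log\log u - 1 + O\!\left(\frac{\log\log u}{\log u}\right)\right)\right). \]
So the whole argument reduces to two routine checks: verify that the hypotheses $u \to \infty$, $X \ge Y \ge (\log X)^2$ place us inside the uniform range where the CEP estimate applies, and then extract the leading term from that formula.

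First I would confirm the range condition. From $Y \ge (\log X)^2$ we obtain $\log Y \ge 2 \log\log X$, and hence
\[ u \;=\; \frac{\log X}{\log Y} \;\le\; \frac{\log X}{2\log\log X}. \]
Taking logarithms gives $\log u \le \log\log X + O(1) \le (\tfrac{1}{2}+o(1))\log Y$. Thus $u \le Y^{1/2}$ for $X$ sufficiently large, which is comfortably inside the $u \le Y^{1-\epsilon}$ region (for any $\epsilon < 1/2$) where the CEP formula is valid.

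Second, I would extract the main term. Once $u \to \infty$, both $\log\log u$ and the constant $1$ are $o(\log u)$, as is the error $O(\log\log u / \log u)$. These all get absorbed into a single factor $(1+o(1))$ multiplying $u\log u$ in the exponent, leaving
\[ \psi(X,Y) = X \exp(-(1+o(1))\, u\log u), \]
which is exactly the claim.

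I do not anticipate a real obstacle: the lemma is simply a convenient repackaging of a single known estimate. The only point requiring attention is checking that the smoothness bound $Y \ge (\log X)^2$ is strong enough to put us in the uniform range of the CEP theorem, but as shown above this is essentially immediate.
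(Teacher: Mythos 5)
Your proposal is correct and is exactly what the paper intends: the paper supplies no proof of this lemma, it only cites the Corollary on p.~15 of [CEP83], and your argument is the verification that this citation delivers the stated estimate. Two small remarks. First, the CEP Corollary's range of uniformity is most cleanly stated as $Y \ge (\log X)^{1+\epsilon}$ (for any fixed $\epsilon > 0$), not $u \le Y^{1-\epsilon}$; with that formulation, the range check is immediate, since the lemma's hypothesis $Y \ge (\log X)^2$ is literally the CEP condition with $\epsilon = 1$. Your detour through $u \le Y^{1/2}$ is not wrong, but it is a reparametrization of the same condition and slightly obscures how directly the hypotheses match. Second, your extraction of the main term is fine: once $u \to \infty$, each of $\log\log u$, the constant $1$, and the $O(\log\log u/\log u)$ error term is $o(\log u)$, so the exponent collapses to $-(1+o(1))\,u\log u$ as claimed.
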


The following result is a special case of the fundamental lemma of sieve theory, as formulated  in \cite[Theorem 7.2, p.\ 209]{HR74}.

\begin{lem}\label{lem:sievelem} Let $X \ge Z \ge 3$. Suppose that the interval $I = (u,v]$ has length $v-u=X$. Let $\mathcal{Q}$ be a set of primes not exceeding $Z$. For each $q \in \mathcal{Q}$, choose a residue class $a_q \bmod{q}$.  The number of integers $n\in I$ not congruent to $a_q\bmod{q}$ for any $q \in \mathcal{Q}$ is 
\[ X \left(\prod_{q\in \mathcal{Q}}\left(1-\frac{1}{q}\right)\right)\left(1 + O\left(\exp\left(-\frac{1}{2}\frac{\log{X}}{\log{Z}}\right)\right)\right). \]
\end{lem}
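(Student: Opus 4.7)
The lemma is a direct specialization of the fundamental lemma of the combinatorial (Brun) sieve, so the plan is to verify that the hypotheses of \cite[Theorem 7.2, p.~209]{HR74} are satisfied and then read off the conclusion.

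First, set up the sieve. Take the sequence $\mathcal{A}$ to be the integers in the interval $I$, so that $|\mathcal{A}| = X$ up to a harmless error of $1$. Take the sifting set of primes to be $\mathcal{Q}$. For each $q \in \mathcal{Q}$, declare the ``bad'' subsequence $\mathcal{A}_q \subseteq \mathcal{A}$ to consist of those $n \in I$ with $n \equiv a_q \pmod{q}$. The sifting function
\[ S(\mathcal{A},\mathcal{Q},Z) := \#\{n \in \mathcal{A} : n \notin \mathcal{A}_q \text{ for any } q \in \mathcal{Q}\} \]
is exactly the count we must estimate.

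Next, verify the standard inputs to the fundamental lemma. For the local densities, the Chinese Remainder Theorem gives, for every squarefree $d$ dividing $\prod_{q \in \mathcal{Q}} q$,
\[ |\mathcal{A}_d| = \frac{X}{d} + r_d, \qquad |r_d| \le 1, \]
so the density function is $\omega(q) \equiv 1$. Since $\omega(q)/q = 1/q$, Mertens' theorem shows that $\omega$ is a sieve density of dimension $\kappa = 1$, verifying the hypothesis $(\Omega_2)$; the elementary bound $\omega(q)/q = 1/q \le 1/2$ verifies $(\Omega_1)$. The remainder control $\sum_{d \le D,\, d \mid \prod_{q \in \mathcal{Q}} q} |r_d| \le D$ holds for any $D$, so any level of distribution is permitted.

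Finally, apply \cite[Theorem 7.2]{HR74} with level of distribution $D = X$, so that the sieve parameter is $s := \log D/\log Z = \log X/\log Z$. The fundamental lemma then delivers
\[ S(\mathcal{A},\mathcal{Q},Z) = X \prod_{q \in \mathcal{Q}}\left(1 - \frac{1}{q}\right)\left(1 + O\bigl(e^{-s\log s}\bigr)\right). \]
For $s \ge 3$ the error factor is dominated by $\exp(-\tfrac12 \log X/\log Z)$; when $s < 3$ the asserted error term is itself of order $1$, and the inequality is vacuous. This yields the claimed estimate. I anticipate no genuine obstacle: the only subtlety is translating the sharper error-term shape $e^{-s\log s}$ of \cite{HR74} into the cleaner but weaker bound stated above, which is a one-line comparison.
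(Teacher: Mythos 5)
Your overall strategy---reducing to \cite[Theorem 7.2]{HR74} with density $\omega\equiv 1$ and dimension $\kappa=1$---is the correct one and is exactly what the paper's citation intends; the verification of $(\Omega_1)$ and $(\Omega_2)$ is fine. However, there is a genuine gap in the handling of the level of distribution. The fundamental lemma does not simply absorb the remainder term: it yields an estimate of the shape
\[
S(\mathcal{A},\mathcal{Q},Z) = XW(Z)\bigl\{1+O\bigl(e^{-u\log u}\bigr)\bigr\} + O\biggl(\sum_{\substack{d<D\\ d\mid P(Z)}} 3^{\nu(d)}|R_d|\biggr),
\]
where $D=Z^{u}$ is the level and $W(Z)=\prod_{q\in\mathcal{Q}}(1-1/q)$. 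Taking $D=X$, as you do, the remainder sum can be as large as $\gg X(\log X)^2$ (and is $\gg X$ even without the $3^{\nu(d)}$ weight), while the main term $XW(Z)$ is only $O(X)$; the remainder then swamps everything. The observation that ``any level of distribution is permitted'' because $\sum_{d\le D}|r_d|\le D$ is precisely the step that fails: what is needed is for the remainder to be small \emph{compared to the main term times the target error factor}, and that forces $D$ to be a power of $X$ strictly below the first.

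The fix is standard. Take $D=X^{1/2}$, so $u=\tfrac12\log X/\log Z$. Then the remainder sum is $\ll X^{1/2}(\log X)^2$; since $W(Z)\gg 1/\log Z$ (Mertens) and $\exp(-\tfrac12\log X/\log Z)=X^{-1/(2\log Z)}$, with the exponent $\tfrac12-\tfrac{1}{2\log Z}$ bounded below by $\tfrac12-\tfrac{1}{2\log 3}>0$ (this is where the hypothesis $Z\ge 3$ is used), one gets $X^{1/2}(\log X)^2\ll XW(Z)\exp\bigl(-\tfrac12\log X/\log Z\bigr)$ for all $X\ge Z\ge 3$ past an absolute threshold, with the finitely many small $X$ absorbed into the $O$-constant. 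The main error $e^{-u\log u}$ with this $u$ is $\le\exp\bigl(-\tfrac12\log X/\log Z\bigr)$ once $u\ge e$; for $u$ below an absolute constant the asserted relative error is itself $\gg 1$, and the bound follows from any crude upper-bound sieve estimate $S\ll XW(Z)$. With these adjustments your argument goes through.
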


To understand the products over primes appearing in Lemma \ref{lem:sievelem}, we use an estimate due independently to Pomerance (see Remark 1 of \cite{pomerance77}) and Norton (see the Lemma on p.\ 699 of \cite{norton76}).

\begin{lem}\label{lem:PN} Let $m$ be a positive integer and let $a$ be an integer coprime to $m$. Let $p_{a,m}$ denote the least prime $p\equiv a\pmod{m}$. For all $X\ge m$,
\[ \sum_{\substack{p \le X \\ p \equiv a\pmod{m}}} \frac{1}{p} = \frac{\log_2{X}}{\phi(m)} + \frac{1}{p_{a,m}}+ O\left(\frac{\log{(2m)}}{\phi(m)}\right).\]
\end{lem}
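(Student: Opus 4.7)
My plan decomposes the sum at the scale $Y = m^2$, isolating the possibly dominant term $1/p_{a,m}$ and treating small and large primes separately.

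I first dispose of the case $p_{a,m} > X$: the left-hand side vanishes, while a polynomial-in-$m$ upper bound on $p_{a,m}$ (e.g., Linnik's theorem) combined with $X < p_{a,m}$ forces $\log_2 X \ll \log m$, so the right-hand side is $O(\log m/\phi(m))$. Assuming henceforth $p_{a,m} \leq X$, I extract $1/p_{a,m}$ from the sum and reduce to showing
\[ S := \sum_{\substack{p_{a,m} < p \leq X \\ p \equiv a \pmod m}} \frac{1}{p} = \frac{\log_2 X}{\phi(m)} + O\!\left(\frac{\log(2m)}{\phi(m)}\right). \]

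For primes $p \in (p_{a,m}, m^2]$ in the arithmetic progression $\{a + km : k \geq 1\}$, comparison with the harmonic sum on this progression gives a contribution of $\ll \sum_{1\le k\le m} 1/(km) \ll (\log m)/m \ll \log(2m)/\phi(m)$. For primes $p \in (m^2, X]$ with $X$ large enough that the Siegel--Walfisz theorem applies (say $m \leq (\log X)^A$ for suitable $A$), Abel summation combined with Brun--Titchmarsh (upper side) and Siegel--Walfisz (lower side) yields
\[ \sum_{\substack{m^2 < p \leq X \\ p \equiv a \pmod m}} \frac{1}{p} = \frac{\log_2 X - \log_2 m^2}{\phi(m)} + O\!\left(\frac{1}{\phi(m)}\right), \]
and $\log_2 m^2 = \log_2 m + O(1)$ folds into the error.

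The main obstacle is the intermediate regime where $m^2 \leq X$ but Siegel--Walfisz is not yet effective (e.g., $\log X < m^{\delta}$ for small $\delta$). Here, however, the purported main term $\log_2 X/\phi(m)$ is itself $O(\log m/\phi(m))$, so the generous error term absorbs everything: Brun--Titchmarsh alone gives the required upper bound on the tail of $S$, while the trivial lower bound $S \geq 0$ closes the gap. In this way the statement survives uniformly across the full range $X \geq m$.
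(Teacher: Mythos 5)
The paper itself does not prove Lemma~2.3; it cites Pomerance and Norton, and later (in the proof of Theorem~1.2) sketches an adaptation of Pomerance's argument in which the sum is split at $10p$ and at $\exp(p^{1/K})$, with Brun--Titchmarsh controlling the middle range and Siegel--Walfisz the top range. Your proof follows the same strategy, with thresholds $m^2$ and (implicitly) the Siegel--Walfisz cutoff, so it is essentially the same route as the source the paper relies on.

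One imprecision worth flagging: your displayed estimate
\[
\sum_{\substack{m^2 < p \le X \\ p\equiv a\pmod m}} \frac1p = \frac{\log_2 X - \log_2 m^2}{\phi(m)} + O\!\left(\frac1{\phi(m)}\right)
\]
is stronger than what Brun--Titchmarsh plus Siegel--Walfisz actually deliver. Siegel--Walfisz says nothing useful for $t$ in the subrange $m^2 < t \le \exp(m^{1/A})$ (there $m > (\log t)^A$), so that part of the partial-summation integral is controlled only from above, by Brun--Titchmarsh, and only up to $O(\log m/\phi(m))$, not $O(1/\phi(m))$. Your ``intermediate regime'' paragraph addresses the case when $X$ itself lies below the Siegel--Walfisz cutoff, but not the fact that for large $X$ the low-$t$ piece of the integral must still be discarded into the error by the same positivity-plus-Brun--Titchmarsh reasoning. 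Fortunately the slack is only $O(\log m/\phi(m))$, which the lemma's error term $O(\log(2m)/\phi(m))$ comfortably absorbs, so the final conclusion stands; you should just weaken the intermediate claim. A small further remark: invoking Linnik's theorem to dismiss the case $p_{a,m} > X$ is much more than needed --- any bound of the form $p_{a,m} \le \exp(m^{O(1)})$, e.g.\ from the prime number theorem for arithmetic progressions, already forces $\log_2 X \ll \log m$ there.
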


\section{Equidistribution of Euler's totient in coprime residue classes: Proof of Theorem \ref{thm:mainphi}}\label{sec:eulerphi}

\begin{lem}\label{lem:phiasymp} Whenever $x,p$, and $\frac{\log{x}}{\log{p}}$ all tend to infinity, we have that
\[ \#\{n\le x: p\nmid \phi(n)\} \sim \frac{x}{(\log{x})^{1/(p-1)}}. \]
\end{lem}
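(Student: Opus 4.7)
The plan is to first reduce the problem to estimating $M(x,p) := \#\{n\le x : n \text{ has no prime factor} \equiv 1\pmod{p}\}$. One has $p\nmid \phi(n)$ iff $n$ has no prime factor $q\equiv 1\pmod{p}$ and $p^2\nmid n$ (using $\phi(q^a) = q^{a-1}(q-1)$, so $p \mid \phi(n)$ iff some $q^a \Vert n$ has $q\equiv 1\pmod p$ or $q = p$ with $a \ge 2$). The number of $n \le x$ violating only the second condition is bounded by $\sum_{a\ge 2} M(x/p^a, p)$; applying the fundamental lemma of the sieve (Lemma \ref{lem:sievelem}) together with Lemma \ref{lem:PN} furnishes a uniform upper bound $M(y,p) \ll y/(\log y)^{1/(p-1)}$, and this makes the $p^2 \mid n$ contribution $O(x/(p^2 (\log x)^{1/(p-1)})) = o(x/(\log x)^{1/(p-1)})$ as $p \to \infty$.

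For the main estimate I would split by the size of $p$. For ``large'' $p$, meaning $p/\log_2 x\to\infty$, Lemma \ref{lem:PN} gives $\sum_{q\le x,\, q\equiv 1\pmod p} 1/q = \log_2 x/(p-1) + O(\log p/(p-1)) = o(1)$; a union bound then shows $M(x,p) = x(1+o(1))$. In this regime $x/(\log x)^{1/(p-1)} = x\exp(-\log_2 x/(p-1)) = x(1+o(1))$ as well, so the asymptotic is immediate.

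For $p$ of more moderate size (an overlapping range, say $p\le C\log_2 x$), the plan is to apply the Landau--Selberg--Delange method in the uniform-in-$p$ form due to Chang and Martin. The Dirichlet series
\[ F(s) = \sum_{\substack{n\ge 1 \\ n \text{ has no prime factor} \equiv 1\pmod p}}\frac{1}{n^s} = \prod_{q\not\equiv 1\pmod p}(1-q^{-s})^{-1} = \zeta(s)\prod_{q\equiv 1\pmod p}(1-q^{-s}) \]
factors as $\zeta(s)^{1-1/(p-1)}\,G(s)$ with $G$ analytic in a neighborhood of $s=1$; the fractional exponent reflects the natural density $1/(p-1)$ of primes $\equiv 1\pmod p$. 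Chang--Martin then delivers
\[ M(x,p) \sim \frac{G(1)}{\Gamma(1 - 1/(p-1))}\cdot\frac{x}{(\log x)^{1/(p-1)}}. \]
Finally, one checks that $G(1)/\Gamma(1-1/(p-1))\to 1$ as $p\to\infty$: the Gamma factor trivially tends to $1$, and a term-by-term analysis of $\log G(1)$ as a sum over primes (of the local Euler factors $(1-q^{-1})^{\alpha}$ and $(1-q^{-1})^{\alpha-1}$ with $\alpha = 1-1/(p-1)$), combining Lemma \ref{lem:PN} with Mertens's theorem, shows that the divergent pieces cancel and the remainder is $o(1)$.

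The principal obstacle is the uniformity in $p$ needed in the moderate-$p$ range. This is precisely what Chang and Martin's version of Landau--Selberg--Delange supplies: through uniform control of $L(s,\chi)$ for $\chi\bmod p$, underpinned by the Weil bound $|\tau_\chi|\le\sqrt{p}$ on Jacobi sums modulo $p$.
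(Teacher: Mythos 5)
Your proposal is a genuinely different route from the paper's. The paper proves Lemma \ref{lem:phiasymp} entirely by elementary sieving: for the upper bound it applies the fundamental lemma (Lemma \ref{lem:sievelem}) to remove primes $q\equiv 1\pmod p$ up to $x^{1/K}$ and estimates the resulting Mertens product via Lemma \ref{lem:PN}; for the lower bound it additionally subtracts, by a direct factorization-and-counting argument, those $n$ which survive the sieve but still have $p\mid\phi(n)$ because of a large prime factor $q\equiv 1\pmod p$, and finally lets $K\to\infty$. You instead split the range into $p/\log_2 x\to\infty$, handled by a union bound, and $p\le C\log_2 x$, handled by the Chang--Martin form of Landau--Selberg--Delange. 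Both work, and your reduction from $p\nmid\phi(n)$ to the squarefree-at-$p$ count $M(x,p)$ is correct and clean. What the paper's approach buys is simplicity of machinery: it never invokes $L$-functions, zero-free regions, or the Chang--Martin theorem for this lemma, reserving those tools for the genuinely harder equidistribution statements (Lemmas \ref{lem:SDphi} and \ref{lem:SDs}). What your approach buys is a single asymptotic (no separate upper/lower bound stages) in the moderate range, at the cost of pulling in all the analytic apparatus.

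The one step you should flesh out more carefully is the claim $G(1)\to 1$. The Euler product
\[ G(1) = \prod_{q\equiv 1\pmod p}\left(1-\frac1q\right)^{\alpha}\prod_{q\not\equiv 1\pmod p}\left(1-\frac1q\right)^{\alpha-1}, \qquad \alpha = 1-\frac{1}{p-1}, \]
is only conditionally convergent, and one must justify that its value coincides with the analytic continuation of $\zeta(s)^{-\alpha}F(s)$ at $s=1$ before the ``term-by-term'' estimate can be made. Concretely, after discarding the $O(\sum_q q^{-2})$ second-order contributions, what remains is
\[ \lim_{T\to\infty}\Bigg(\frac{1}{p-1}\sum_{q\le T}\frac1q - \sum_{\substack{q\le T\\ q\equiv 1\pmod p}}\frac1q\Bigg), \]
which by Mertens and Lemma \ref{lem:PN} is indeed $O(\log p/p) = o(1)$; but identifying this limit with $\lim_{s\to 1^+}\big[\tfrac{1}{p-1}\sum_q q^{-s} - \sum_{q\equiv 1\pmod p}q^{-s}\big]$ is an Abelian step that ultimately rests on the prime number theorem in progressions (or on character orthogonality together with $L(1,\chi)\ne 0$). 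This is all standard but multi-step, not a one-liner. The paper sidesteps the whole issue by working with finite sieve products $\prod_{q\le x^{1/K},\,q\equiv 1\pmod p}(1-1/q)$, where Lemma \ref{lem:PN} applies directly and no limiting Mertens constant ever has to be identified or shown to tend to $1$.
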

\begin{proof} If $n$ has a prime factor $q\equiv 1\pmod{p}$, then $p\mid \phi(n)$. Now fix a real number $K\ge 1$. If $n \le x$ and $p \nmid \phi(n)$, then $n$ is free of prime factors $q\equiv 1\pmod{p}$, and in particular free of all such prime factors $q\le x^{1/K}$. By Lemma  \ref{lem:sievelem}, the number of such $n\le x$ is \[ x \left(\prod_{\substack{q\equiv 1\pmod{p} \\ q\le x^{1/K}}} \left(1-\frac{1}{q}\right)\right)\left(1 +O\left(\exp\left(-\frac{1}{2}K\right)\right)\right).\]
Moreover,
\[ \prod_{\substack{q\equiv 1\pmod{p} \\ q\le x^{1/K}}} \left(1-\frac{1}{q}\right) = \exp\left(-\sum_{\substack{q\equiv 1\pmod{p} \\q \le x^{1/K}}} \left(\frac{1}{q} + O(1/q^2)\right)\right). \]
Since $q>p$ for every $q\equiv 1\pmod{p}$, the sum of the $O(1/q^2)$ terms will be $O(1/p)$. Also, from Lemma \ref{lem:PN}, once $x, p$, and $\frac{\log{x}}{\log{p}}$ are large enough (possibly depending on $K$),
\[ \sum_{\substack{q \equiv 1\pmod{p}\\ q\le x^{1/K}}} \frac{1}{q} = \frac{\log_2{x}}{p-1} + O\left(\frac{\log{K}}{p} + \frac{\log{p}}{p}\right). \]
Putting these estimates back in above, we find that the count of $n\le x$ with $p \nmid \phi(n)$ is (for large $x,p, \frac{\log{x}}{\log{p}}$) at most 
\[ \frac{x}{(\log{x})^{1/(p-1)}} \left(1+O\left(\frac{\log{K}}{p} + \frac{\log{p}}{p} + \exp\left(-\frac{1}{2}K\right)\right)\right),\]
which is (for large $p$) at most $(1+O(\exp(-K/2))) x/(\log{x})^{1/(p-1)}$. Since $K$ can be taken arbitrarily large, the upper bound half of Lemma \ref{lem:phiasymp} follows.

The lower bound is similar. Again, fix $K \ge 1$. From our earlier work, the count of $n\le x$ having no prime factor $q\equiv1\pmod{p}$ with $q\le x^{1/K}$ is (for large $x,p, \frac{\log{x}}{\log{p}}$) $(1+O(\exp(-K/2))) x/(\log{x})^{1/(p-1)}$. Moreover, the same estimate holds if require also that $p\nmid n$. (We acquire an extra factor of $(1-1
/p)$ in our sieve argument, which can be absorbed into $(1+O(\exp(-K/2)))$ for large $p$.) 

Suppose that $n\le  x$ is coprime to $p$ and free of primes $q\equiv 1\pmod{p}$ with $q\le x^{1/K}$ but that nevertheless  $p\mid \phi(n)$. Write $n=AB$, where $A$ is the largest divisor of $n$ composed of primes $q\equiv 1\pmod{p}$. We count the number of $A$ corresponding to a given $B$. Observe that $1< A \le x/B$ and that every prime dividing $A$ exceeds $x^{1/K}$. Also, $A\equiv 1\pmod{p}$, and so $A=1+pa$ for some $a < x/pB$. We can assume that $\frac{\log{x}}{\log{p}} > 2K$, so that $x/pB = (x/B)/p \ge A/p > x^{1/K}/x^{1/2K} = x^{1/2K}$. So by Lemma \ref{lem:sievelem} (sieving $a$, with primes up to $x^{1/2K}$), the number of $A$ corresponding to a given $B$ is \begin{equation}\label{eq:numAB} \ll \frac{x}{pB}\prod_{q \le x^{1/2K},~q\ne p} \left(1-\frac{1}{q}\right) \ll \frac{Kx}{pB\log{x}}.\end{equation}
Since $B$ is free of prime factors $q\equiv 1\pmod{p}$, Mertens' theorem yields \begin{align*}  \sum \frac1B \le \prod_{\substack{q\not\equiv 1\pmod{p}\\q\le x}}(1-1/q)^{-1}&\ll (\log{x})\prod_{\substack{q\equiv 1\pmod{p}\\q\le x}}(1-1/q) \\ 
 &\ll (\log{x}) \exp\Bigg(-\sum_{\substack{q\equiv 1\pmod{p} \\q\le x}}\frac{1}{q}\Bigg) \ll (\log{x})^{1-\frac{1}{p-1}}, \end{align*}
using Lemma \ref{lem:PN} in the last step. Hence, the number of these $n$ is $O(\frac{K}{p} x/(\log{x})^{1/(p-1)})$, which is $O(\exp(-K/2) x/(\log{x})^{1/(p-1)})$ for large $p$. 

From our last two paragraphs, the count of $n\le x$ for which $p\nmid \phi(n)$ is at least $(1+O(\exp(-K/2))) x/(\log{x})^{1/(p-1)}$, for large $x$, $p$, and $\frac{\log{x}}{\log{p}}$. Taking $K$ large completes the proof of the lower bound.
\end{proof}

Using Lemma \ref{lem:phiasymp} and the method of Landau--Selberg--Delange, we can prove Theorem \ref{thm:mainphi} in the range $p \le (\log_2{x})^{2-\delta}$.
\begin{lem}\label{lem:SDphi} Fix $\delta > 0$. Suppose that $x,p \to \infty$, with $p \le (\log_2 x)^{2-\delta}$. The number of $n\le x$ with $\phi(n)\equiv a\pmod{p}$ is
\[ \sim \frac{x}{p (\log{x})^{1/(p-1)}}, \]
uniformly in the choice of $a$ coprime to $p$.
\end{lem}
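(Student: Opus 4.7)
The plan is a Dirichlet character decomposition combined with the uniform-in-modulus Landau--Selberg--Delange (LSD) method developed by Chang and Martin \cite{CM20}.

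First, orthogonality of Dirichlet characters mod $p$ gives, for $(a,p)=1$,
\[
\#\{n\le x:\phi(n)\equiv a\pmod p\} \;=\; \frac{1}{p-1}\sum_{\chi\bmod p}\bar\chi(a)\,T_\chi(x),\quad T_\chi(x) := \sum_{n\le x}\chi(\phi(n)),
\]
with $\chi$ extended by $\chi(0)=0$. The principal-character contribution is $T_{\chi_0}(x)/(p-1) = \#\{n\le x:\gcd(\phi(n),p)=1\}/(p-1)$; by Lemma \ref{lem:phiasymp} (whose hypothesis $\log x/\log p\to\infty$ is met, since $\log p\le(2-\delta)\log_3 x$), this is $\sim x/(p(\log x)^{1/(p-1)})$, exactly the target asymptotic. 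It therefore suffices to prove $\sum_{\chi\ne\chi_0}|T_\chi(x)| = o(x(\log x)^{-1/(p-1)})$, uniformly in $p$.

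For each nonprincipal $\chi\bmod p$, I would study the Dirichlet series $F_\chi(s) := \sum_n \chi(\phi(n))n^{-s}$ through its Euler product. A direct calculation gives local factors
\[
E_q(s) = 1 + \frac{\chi(q-1)\,q^{-s}}{1-\chi(q)\,q^{-s}}\ (q\ne p),\qquad E_p(s) = 1+\chi(-1)p^{-s},
\]
with $E_q(s)\equiv 1$ for $q\equiv 1\pmod p$. Averaging $\chi(q-1)$ across the residue classes mod $p$ (using $\sum_{c\bmod p}\chi(c)=0$) singles out the ``density'' exponent
\[
z_\chi := \frac{1}{p-1}\sum_{\substack{b\bmod p\\b\not\equiv 0,1}}\chi(b-1) = -\frac{\chi(-1)}{p-1},
\]
so $|z_\chi|=1/(p-1)$. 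Writing $F_\chi(s) = \zeta(s)^{z_\chi}H_\chi(s)$, I would verify that $H_\chi$ extends analytically and remains uniformly bounded in a standard zero-free region: first-order mean values $\sum_{q\le x}\chi(q-1)/q^s$ are controlled by Siegel--Walfisz (the range $p\le(\log_2 x)^{2-\delta}$ lies comfortably inside the Siegel--Walfisz regime), while second-order Euler contributions reduce to Jacobi sums $J(\chi,\chi')=\sum_b\chi(b)\chi'(1-b)$, which by Weil's theorem satisfy $|J(\chi,\chi')|\le\sqrt{p}$.

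Invoking Chang--Martin's LSD then yields $|T_\chi(x)|\ll x(\log x)^{\mathrm{Re}(z_\chi)-1}/|\Gamma(z_\chi)| \ll x(\log x)^{1/(p-1)-1}/(p-1)$, using $|\Gamma(z_\chi)|^{-1}\asymp|z_\chi|$ near $0$. Summing over the $p-2$ nontrivial characters and dividing by $p-1$ gives a total nonprincipal contribution $\ll x(\log x)^{1/(p-1)-1}/(p-1)$, whose ratio to the main term $x/(p(\log x)^{1/(p-1)})$ is $\asymp(\log x)^{2/(p-1)-1}=(\log x)^{-(1-o(1))}\to 0$ as $p\to\infty$, closing the argument.

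\emph{Main obstacle.} The crux is verifying the hypotheses of Chang--Martin's LSD uniformly in $p$ all the way up to $(\log_2 x)^{2-\delta}$. The decisive input is Weil's $\sqrt{p}$ bound on Jacobi sums; the trivial bound of $p$ would confine the method to $p\lesssim\log_2 x$, just short of matching the overlap with the combinatorial range treated later in the paper.
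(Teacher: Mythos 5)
Your proposal follows essentially the same path as the paper: orthogonality of Dirichlet characters mod $p$, Lemma \ref{lem:phiasymp} for the principal character, and the Chang--Martin uniform Landau--Selberg--Delange machinery (Proposition \ref{prop:CM}) for the nonprincipal ones, with Jacobi sum bounds supplying $|a_\rho|\le \sqrt p/(p-1)$ and Lemma \ref{lem:dircharbound}'s $p$-dependent zero-free region doing the analytic work. Your computation of the density exponent $z_\chi=-\chi(-1)/(p-1)$ and the observation that $|1/\Gamma(z_\chi)|\asymp 1/(p-1)$ near $0$ are correct and match the paper.

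One intermediate step is stated too optimistically: you claim $|T_\chi(x)|\ll x(\log x)^{1/(p-1)-1}/(p-1)$. What the LSD estimate actually delivers is $T_\chi(x) = x(\log x)^{z_\chi-1}\bigl(G_\chi(1)/\Gamma(z_\chi) + O(MR(x))\bigr)$, and the quantity $G_\chi(1)$, which you have suppressed, contains the product $\prod_{\rho\ne\epsilon}L(1,\rho)^{a_\rho}$ and hence can be as large as $M \asymp \exp(O(\sqrt p\log p))$; the error term $MR(x)$ carries the same factor. So the honest bound is closer to $|T_\chi(x)|\ll x(\log x)^{1/(p-1)-1}\exp(O(\sqrt p\log p))\,p^{O(1)}$. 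This is harmless for your conclusion, because under $p\le(\log_2 x)^{2-\delta}$ one has $\sqrt p\log p = o(\log_2 x)$ and hence $\exp(O(\sqrt p\log p)) = (\log x)^{o(1)}$, so the nonprincipal total is still $o(x/(p(\log x)^{1/(p-1)}))$ --- but tracking that factor is the whole content of making LSD work ``uniformly in $p$,'' and it is precisely what determines that the method reaches $(\log_2 x)^{2-\delta}$ rather than some other threshold. Your ``main obstacle'' paragraph suggests you are aware of this, but the displayed estimate should include the $\exp(O(\sqrt p\log p))$ factor to be correct.
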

We defer the proof of Lemma \ref{lem:SDphi} to \S\ref{sec:SD}. 

Suppose that $x,p\to\infty$ in such a way that $p/\log_2{x}\to\infty$. Then $(\log{x})^{1/(p-1)} \sim 1$, and  $x/(\log{x})^{1/(p-1)} \sim x$. Thus, to finish off Theorem \ref{thm:mainphi}, it will suffice to establish the next two propositions. 

\begin{prop}\label{prop:philargecaseupper} Fix $A>0$. The number of $n\le x$ for which $\phi(n)\equiv a\pmod{p}$ is $\lesssim x/p$ as $x, p \to \infty$, uniformly in $a,p$ with $p \le (\log{x})^A$ and $a\in \Z$ coprime to $p$.
\end{prop}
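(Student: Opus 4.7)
My plan is to first invoke Lemma \ref{lem:SDphi} to restrict attention to the range $(\log_2 x)^{2-\delta} < p \le (\log x)^A$ for some small fixed $\delta>0$; in this range $(\log x)^{1/(p-1)} = 1+o(1)$, so the goal reduces to showing $\#\{n \le x: \phi(n) \equiv a \pmod{p}\} \lesssim x/p$. I would then discard two small exceptional classes of $n$, write the remaining $n$ as $n = qm$ with $q := P^+(n)$, and use the Siegel--Walfisz theorem to count valid $q$ for each $m$.

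To begin, I would fix a small $\epsilon > 0$ and set $Y := \exp((\log x)^{1-\epsilon})$. By Lemma \ref{lem:smooth}, the $Y$-smooth $n\le x$ number $\psi(x,Y) = x\exp(-(1+o(1))\,\epsilon(\log x)^\epsilon \log_2 x)$, and the $n\le x$ with $q^2\mid n$ for some prime $q>Y$ number at most $\sum_{q>Y} x/q^2 \ll x/Y$; both are $o(x/(\log x)^A) = o(x/p)$. For the surviving $n$, one has $q := P^+(n) > Y$ with $q\|n$; setting $m := n/q$ gives $P^+(m) < q \le x/m$ and $\phi(n) = (q-1)\phi(m)$. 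Since $\gcd(a,p)=1$, the congruence $\phi(n)\equiv a\pmod p$ forces $\gcd(\phi(m),p)=1$ together with $q \equiv c_m \pmod p$, where $c_m := 1 + a\phi(m)^{-1}\pmod{p}$ is a single residue class (not $\equiv 1\pmod p$).

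Next, I would bound the count of surviving $n$ by
\[ \sum_m \#\{q: \max(Y,P^+(m)) < q \le x/m,\ q \equiv c_m \pmod p\}. \]
For each $m\le x/Y$ we have $\log(x/m) \ge (\log x)^{1-\epsilon}$; choosing $\epsilon$ so small that $p \le (\log x)^A \le (\log(x/m))^B$ for a fixed $B$, Siegel--Walfisz bounds the inner count by
\[ \frac{\pi(x/m) - \pi(\max(Y, P^+(m)))}{p-1} + O\!\left(\frac{x}{m} \exp(-c\sqrt{\log(x/m)})\right). \]
The main term summed over $m$ equals $(p-1)^{-1}$ times the number of pairs $(m, q)$ with $q$ prime and $\max(Y, P^+(m)) < q \le x/m$, each corresponding to a distinct $n = qm \le x$ with $P^+(n) > Y$ and $P^+(n) \| n$; so the main contribution is at most $x/(p-1) = (1+o(1))x/p$. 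A split of the $m$-sum at $\sqrt{x}$ bounds the total error by $x(\log x)\exp(-c'(\log x)^{(1-\epsilon)/2}) = o(x/p)$.

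\textbf{The main obstacle.} The technical crux will be keeping Siegel--Walfisz applicable uniformly in $p \le (\log x)^A$: discarding the $Y$-smooth $n$ at the outset is precisely what secures $\log(x/m) \ge (\log x)^{1-\epsilon}$ throughout the $m$-sum, keeping $p$ safely inside the range of validity. The other crucial saving is bounding the main Siegel--Walfisz term by the trivial overcount $\sum_m (\pi(x/m) - \pi(\max(Y, P^+(m)))) \le x$ rather than by a Brun--Titchmarsh estimate per $m$; the latter approach would sum to $\asymp x\log_2 x/p$ and miss the target bound by exactly the $\log_2 x$ factor.
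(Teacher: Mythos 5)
Your proposal is correct and follows essentially the same route as the paper: decompose $n = mP$ with $P = P^{+}(n)$, discard the smooth $n$ and those with $P^2 \mid n$, apply Siegel--Walfisz to the prime $P$ in its residue class mod $p$, and then bound the resulting main term trivially by noting that the map $(m,P) \mapsto mP$ is injective (so the sum is $\le x/(p-1)$). The paper uses $L = \exp(\sqrt{\log x})$ where you use $\exp((\log x)^{1-\epsilon})$ — an inconsequential choice — and your initial reduction via Lemma \ref{lem:SDphi} to the range $p > (\log_2 x)^{2-\delta}$ is a harmless but unnecessary detour, since the same argument already covers all $p \le (\log x)^A$ with $p\to\infty$. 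Your closing observation — that the trivial overcount $\le x$ is the right move, since Brun--Titchmarsh per $m$ would lose a factor of $\log_2 x$ — is exactly the point; the paper makes the same choice implicitly by deriving the identity \eqref{eq:fundamentalphi} and then bounding its right-hand side by $x/(p-1)$.
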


\begin{prop}\label{prop:philargecaselower} Fix $A>0$. The number of $n\le x$ for which $\phi(n)\equiv a\pmod{p}$ is $\gtrsim x/p$ as $x, \frac{p}{\log_2 x} \to \infty$, uniformly in $a,p$ with $p \le (\log{x})^A$ and $a\in \Z$ coprime to $p$.
\end{prop}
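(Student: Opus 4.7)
The plan is to construct enough $n \le x$ with $\phi(n) \equiv a \pmod{p}$ by decomposing each $n \ge 2$ uniquely as $n = mq$ with $q = P^+(n)$, so that $q > P^+(m)$ and $\phi(n) = \phi(m)(q-1)$. When $p \nmid \phi(m)$, the condition $\phi(n) \equiv a \pmod{p}$ becomes the single arithmetic-progression condition $q \equiv c_m \pmod{p}$, where $c_m := (1 + a\phi(m)^{-1}) \bmod p$. I would call $m$ \emph{good} when $p \nmid \phi(m)$ and $\phi(m) \not\equiv -a \pmod{p}$ (ensuring $c_m$ is coprime to $p$); for such $m$, Siegel--Walfisz --- which applies uniformly for $p \le (\log x)^A$ --- counts the admissible primes $q$.

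Choose $Y_0 := \exp((\log x)^{1/2})$, so that $\psi(x, Y_0) = o(x)$ by Lemma~\ref{lem:smooth} and Siegel--Walfisz gives relative error $o(1)$ for primes up to $x/m$ with $m \le x/Y_0$. For each good $m \le x/Y_0$, I would obtain
\[
\#\{q \text{ prime}: P^+(m) < q \le x/m,\ q \equiv c_m \pmod{p}\} = \frac{\pi(x/m) - \pi(P^+(m))}{p-1}\,(1+o(1)) + \mathrm{err}_m.
\]
Summing over good $m \le x/Y_0$ and using
\[
\sum_{m \le x/Y_0}\bigl(\pi(x/m) - \pi(P^+(m))\bigr) \ge \#\{n \le x: P^+(n) > Y_0\} = x(1+o(1))
\]
should yield $N(a) \gtrsim x/(p-1) \sim x/p$, provided both the bad-$m$ contribution and the cumulative $\mathrm{err}_m$ are $o(x/p)$.

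The bad-$m$ contributions split as follows. If $p \mid \phi(m)$, then $p \mid \phi(n)$ automatically, so the total contribution is at most $\#\{n \le x: p \mid \phi(n)\} = o(x)$ by Lemma~\ref{lem:phiasymp} and $p/\log_2 x \to \infty$. If $\phi(m) \equiv -a \pmod{p}$ (with $p \nmid \phi(m)$), a dyadic decomposition over $m \in (M/2, M]$ together with Proposition~\ref{prop:philargecaseupper} (which bounds the count of such $m \le M$ by $\lesssim M/p$) yields a total of $O(x \log_2 x / p) = o(x)$, once again because $p \gg \log_2 x$.

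The main obstacle I anticipate is controlling the cumulative Siegel--Walfisz error coming from the ``subtraction'' term $\pi(P^+(m); p, c_m)$: a naive estimate is dominated by $m$-prime contributions (reflecting the difficulty posed by $n$ close to a semiprime) and overshoots $x/p$. Resolving this should follow the ``anatomical'' Banks--Harman--Shparlinski framework referenced in the introduction --- one refines the decomposition by peeling off the two largest prime factors $q_1 < q_2 = P^+(n)$ and applies Siegel--Walfisz instead to $q_2$ in the arithmetic progression determined by the pair $(n/(q_1 q_2), q_1)$, with a dyadic analysis in $q_1$ keeping the milder $\pi(q_1; p, \cdot)$ subtraction under control. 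The atypical range in which $n$ has at most one prime factor exceeding $Y_0$ has cardinality $o(x)$ and can be handled separately (e.g.\ via the single-prime decomposition with $m$ forced to be $Y_0$-smooth, which avoids the subtraction issue entirely).
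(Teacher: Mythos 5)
Your framework matches the paper's closely: same decomposition $n=mP$ with $P=P^+(n)$, same notion of ``good'' $m$ (those with $\phi(m)\not\equiv 0,-a\pmod p$), Siegel--Walfisz for the main term, and --- crucially --- the same two tools for the bad-$m$ contributions, namely Lemma~\ref{lem:phiasymp} for $p\mid\phi(m)$ (valid because $p/\log_2 x\to\infty$) and Proposition~\ref{prop:philargecaseupper} applied at intermediate scales for $\phi(m)\equiv -a\pmod p$. So the plan would, with care, give a complete proof.

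However, the ``obstacle'' you anticipate from the subtraction term $\pi(P^+(m);p,c_m)$, and the BHS-type two-prime peeling you propose to resolve it, are not needed here. The fix is much simpler and is exactly what the paper does: \emph{discard all $n$ with $P^+(n)\le L:=\exp(\sqrt{\log x})$} (these number $O(x/L)=o(x/p)$, a stronger bound than the $o(x)$ you quote), and then count primes $P$ only in the truncated interval $(L_m,x/m]$ with $L_m:=\max\{P^+(m),L\}$. Now both endpoints of the interval exceed $L$, so Siegel--Walfisz applies uniformly at each (indeed $p\le(\log x)^A\le(\log L_m)^{2A}$). Moreover, since $y\mapsto y\exp(-c\sqrt{\log y})$ is increasing and $L_m\le x/m$, the error from the lower endpoint is dominated by the error from the upper endpoint, giving a single bound
\[
\mathrm{err}_m\ll_A \frac{x}{m}\exp\bigl(-c\sqrt{\log(x/m)}\bigr)\ll\frac{x}{m}\exp\bigl(-c(\log x)^{1/4}\bigr),
\]
which summed over $m\le x$ is $\ll x(\log x)\exp(-c(\log x)^{1/4})=o(x/p)$. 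No refinement by the two largest prime factors, and no separate treatment of the ``at most one large prime'' regime, is necessary. (The two-prime peeling does appear in the paper, but only in the proof of Theorem~\ref{thm:mains} for the sum-of-divisors function, where a joint condition on $\sigma(m)$ and $s(m)$ genuinely requires it.) One other cosmetic difference: the paper first establishes the identity \eqref{eq:fundamentalphi}, from which both Propositions~\ref{prop:philargecaseupper} and~\ref{prop:philargecaselower} fall out; your proposal re-derives the lower-bound half of that identity directly, which is fine.
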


The proofs of Propositions \ref{prop:philargecaseupper} and \ref{prop:philargecaselower} both begin the same way. In what follows, we assume $x,p\to\infty$ and that $p\le (\log{x})^A$, for a fixed $A>0$. We set $L:=\exp(\sqrt{\log x})$.

For each $n>1$, we may think of $n$ as factored in the form $n=mP$, where $P = P^{+}(n)$. Then
\[ \sum_{\substack{1 < n\le x\\ \phi(n)\equiv a\pmod{p}}} 1 = \sum_{\substack{m, P:~ mP\le x\\ P \ge P^{+}(m) \\ \phi(Pm) \equiv a\pmod{p}}}1. \]
By Lemma \ref{lem:smooth}, the number of $n\le x$ for which $P \le L$ is $O(x/L)$, which is $o(x/p)$ in our range of $p$. Such a contribution is negligible from the point of view of our asymptotic formulas. Thus, we may assume that $P > L$. We can also assume $P^2\nmid Pm=:n$ (equivalently, $P> P^{+}(m)$), since the number of $n\le x$ divisible by $r^2$ for an integer $r > L$ is at most $x\sum_{r > L} r^{-2} \ll x/L$. Then $\phi(Pm)=(P-1)\phi(m)$. For a given $m$, the congruence $(P-1) \phi(m) \equiv a\pmod{p}$ holds for all $P$ in a certain coprime residue class $a_{p,m}\bmod{p}$ as long as $p\nmid \phi(m)$ and $\phi(m)\not\equiv -a\pmod{p}$. So writing $L_m := \max\{P^{+}(m),L\}$, 
\begin{equation}\label{eq:toestimate} \sum_{\substack{m, P:~mP\le x\\ P \ge P^{+}(m) \\ \phi(Pm) \equiv a\pmod{p}}}1 = \Bigg(\sum_{\substack{m \le x \\ \phi(m)\not\equiv 0,-a\pmod{p} \\ L_m < x/m}} \sum_{\substack{L_m < P \le x/m \\ P \equiv a_{p,m}\pmod{p}}} 1\Bigg) + o(x/p). \end{equation}

Since $p \le (\log{x})^{A} \le (\log(L_m))^{2A}$, the Siegel--Walfisz theorem (see \cite[Corollary 11.21]{MV07}) implies that, for a certain absolute positive constant $c$,
\begin{equation}\label{eq:toplug} \sum_{\substack{L_m < P \le x/m \\ P \equiv a_{p,m}\pmod{p}}} 1 = \frac{1}{p-1} \sum_{\substack{L_m < P \le x/m}} 1 + O_A\left(\frac{x}{m} \exp(-c\sqrt{\log(x/m)})\right). \end{equation}
Since $\log(x/m)^{1/2} \ge (\log{x})^{1/4}$, 
if we plug \eqref{eq:toplug} into the right-hand side of \eqref{eq:toestimate}, the $O$-terms contribute
\[ \ll_{A} x \exp(-c(\log{x})^{1/4}) \sum_{m \le x} \frac{1}{m} \ll x \exp\left(-\frac{1}{2}c (\log{x})^{1/4}\right), \]
which is $o(x/p)$. The main terms contribute
\[ \frac{1}{p-1}\sum_{\substack{m \le x \\ \phi(m)\not\equiv 0,-a\pmod{p} \\ L_m < x/m}}  \sum_{\substack{L_m < P \le x/m}}1.\]

Carrying out our earlier simplifications, but in reverse, we find that
\[ \sum_{\substack{m \le x \\ \phi(m)\not\equiv 0,-a\pmod{p} \\ L_m < x/m}}  \sum_{\substack{L_m < P \le x/m}}1 = \Bigg(\sum_{\substack{m, P:~mP\le x \\ P \ge P^{+}(m) \\ \phi(m)\not\equiv 0,-a\pmod{p}}} 1\Bigg) + o(x/p). \] 
Putting all of this together yields following fundamental relation:
\begin{equation}\label{eq:fundamentalphi} \sum_{\substack{1 < n \le x \\ \phi(n) \equiv a\pmod{p}}} 1 = \Bigg(\frac{1}{p-1} \sum_{\substack{m, P:~mP\le x \\ P \ge P^{+}(m) \\ \phi(m)\not\equiv 0,-a\pmod{p}}} 1\Bigg) + o(x/p).\end{equation}

\begin{proof}[Proof of  Proposition \ref{prop:philargecaseupper}] 
The right-hand sum in \eqref{eq:fundamentalphi} is trivially bounded by $x$, since every integer $n>1$ has a unique representation in the form $mP$ with $P \ge P^{+}(m)$. Hence, the right-hand side of \eqref{eq:fundamentalphi} is at most $x/(p-1) + o(x/p) = (1+o(1))x/p$, as desired.
\end{proof}

\begin{proof}[Proof of Proposition \ref{prop:philargecaselower}] Since $\sum_{\substack{m, P:~mP\le x \\ P \ge P^{+}(m)}} 1 = x + O(1)$, in view of \eqref{eq:fundamentalphi} the claimed lower bound will follow if it is shown that both
\begin{equation}\label{eq:phi1} \sum_{\substack{m, P:~mP\le x \\ P \ge P^{+}(m) \\ \phi(m)\equiv 0\pmod{p}}} 1 = o(x)\end{equation}
and
\begin{equation}\label{eq:phi2} \sum_{\substack{m, P:~mP\le x \\ P \ge P^{+}(m) \\ \phi(m)\equiv -a\pmod{p}}} 1 = o(x). \end{equation}

If $n=mP$ is counted by the left-hand side of \eqref{eq:phi1}, then $n\le x$ and $p\mid \phi(m) \mid \phi(n)$. Since $p/\log_2{x}\to\infty$, Lemma \ref{lem:phiasymp} puts $n$ in a set of size $o(x)$, proving \eqref{eq:phi1}.


We turn now to \eqref{eq:phi2}. We first consider all $n$ with $1 < n\le x$ of the form $n=mP$, $P \ge P^{+}(m)$, having $m \le L:=\exp(\sqrt{\log{x}})$. The number of such $n$ does not exceed
\[ \sum_{m \le L} \sum_{P \le x/m} 1 \ll \frac{x}{\log{x}} \sum_{m \le L} \frac{1}{m} \ll \frac{x}{\sqrt{\log x}}= o(x).\]
So for the purpose of establishing \eqref{eq:phi2}, we may tack on to its left-hand side the condition that $m > L$. Then $x/P > L$. We now bound the number of $n=mP$ that occur by counting, for each $P$, the number of corresponding $m\le x/P$. Since $p \le (\log{x})^{A}<  (\log(x/P))^{2A}$, we may apply Proposition \ref{prop:philargecaseupper}.  We find that if $p$ and $x$ are sufficiently large and in our given range,
\[ \sum_{\substack{m, P:~mP\le x \\ P \ge P^{+}(m) \\ m \ge L\\ \phi(m)\equiv -a\pmod{p}}} 1 \le \sum_{P\le x/L} \sum_{\substack{m \le x/P \\ \phi(m) \equiv -a\pmod{p}}} 1 \le \frac{2x}{p}\sum_{P\le x}\frac{1}{P}, \]
which is $\ll x\log_2{x}/p = o(x)$, as desired. \end{proof}

\section{Values of $\phi(n)$ divisible by $p$: Proof of Theorem \ref{thm:secondphi}}

We suppose, as in the statement of Theorem \ref{thm:secondphi}, that $x$ and $p/\log_2 x$ tend to infinity, with $p\le (\log{x})^A$. We start the proof by showing that $\sum_{q\le x,~q\equiv 1\pmod{p}} 1/q \sim \log_2 x/p$. For this  we adapt Pomerance's proof of  Lemma \ref{lem:PN}. Fix $K\ge A$. Noting that any prime congruent to $1$ mod $p$ exceeds $p$, we see that
\[ \sum_{\substack{q \le x \\ q\equiv 1\pmod{p}}}\frac{1}{q} = O(1/p) + \int_{10p}^{\exp(p^{1/K})} \frac{\mathrm{d}\pi(t;p,1)}{t} + \int_{\exp(p^{1/K})}^{x} \frac{\mathrm{d}\pi(t;p,1)}{t}.\]
We  assume throughout this argument that $x$ and $p/\log_2 x$ are large (allowed to depend on $K$). Then $10p < \exp(p^{1/K})$. By the Brun--Titchmarsh inequality (see, e.g., \cite[Theorem 3.9, p.\ 90]{MV07}), $\pi(t;p,1) \ll \frac{t}{p\log(t/p)}$ for all $t>p$, and so the first right-hand integral in the last display is 
\[ \ll \frac1p + \frac1p\int_{10p}^{\exp(p^{1/K})} \frac{\mathrm{d}t}{t\log(t/p)} \ll \frac{\log{p}}{Kp}. \]
By the Siegel--Walfisz theorem, for all $t\ge \exp(p^{1/K})$, \begin{align*} \pi(t;p,1) &= \frac{\mathrm{li}(t)}{p-1}  + O_K(t\exp(-c\sqrt{\log t}))\\
&= \frac{t}{(p-1)\log{t}} + O\left(\frac{t}{p (\log{t})^2}\right) + O_K(t\exp(-c\sqrt{\log t})),
\end{align*}
leading to the conclusion that 
\begin{align*}
\int_{\exp(p^{1/K})}^{x} \frac{\mathrm{d}\pi(t;p,1)}{t}  &=\frac{\log_2{x}}{p-1} + O\left(\frac{\log{p}}{Kp}\right) + O_K\left(\frac{1}{p}\right) \\ &= \frac{\log_2{x}}{p} + O\left(\frac{\log_2{x}}{p^2} + \frac{\log{p}}{Kp}\right) + O_K\left(\frac{1}{p}\right). \end{align*}
Assembling these estimates, we find that if $x, p/\log_2{x}$ are large and $p\le (\log{x})^A$, 
\[ \sum_{\substack{q\le x\\q\equiv 1\pmod{p}}}\frac{1}{q} = \frac{\log_2{x}}{p} \left(1+ O(A/K)\right).  \]
Since $K$ can be taken arbitrarily large, $\sum_{q\le x,~q\equiv 1\pmod{p}} 1/q \sim \log_2{x}/p$, as claimed.

The upper bound in Theorem \ref{thm:secondphi} now follows quickly. If $p \mid \phi(n)$, either $p^2\mid n$ or $q\mid n$ for some $q\equiv 1\pmod{p}$. The former occurs for at most $x/p^2$ values of $n\le x$, which is negligible compared to $x\log_2{x}/p$. The latter occurs for at most $x\sum_{q\le x,~q\equiv1\pmod{p}}1/q = (1+o(1)) x\log_2{x}/p$ values of $n$. 

For a lower bound, it is enough to bound from below the number of $n\le x$ having at least one prime factor $q\equiv 1\pmod{p}$. We perform the first two steps of inclusion-exclusion. Let $N_1$ count each $n\le x$ weighted by $k(n)$, where $k(n)$ is the number of its distinct prime divisors $q\equiv 1\pmod{p}$, and let $N_2$ count each $n\le x$ weighted by $\binom{k(n)}{2}$. Since $k - \binom{k}{2} \le 1$ for each integer $k \ge 0$, our count is bounded below by $N_1-N_2$. Now $N_1 = \sum_{q\le x,~q\equiv 1\pmod{p}}\lfloor x/q\rfloor = (x\sum_{q\le x,~q\equiv 1\pmod{p}} 1/q) + O(x/p\log{x}) = (1+o(1)) x\log_2{x}/p$, while
\[ N_2 = \sum_{\substack{q_1 < q_2 \le x \\ q_1\equiv q_2\equiv 1\pmod{p}}} \left\lfloor\frac{x}{q_1 q_2} \right\rfloor \le x \Bigg(\sum_{\substack{q\le x \\ q\equiv 1\pmod{p}}}\frac{1}{q}\Bigg)^2 = (1+o(1)) \frac{x(\log_2{x})^2}{p^2}, \]
which is $o(x\log_2 x/p)$.

\section{Equidistribution of the sum of proper divisors: Proof of Theorem \ref{thm:mains}} 

As explained in the introduction, we may confine our attention to the situation when $p\to\infty$.

\begin{lem}\label{lem:sigmaasymp} Fix $A>0$. Suppose that $p, x, \frac{\log{x}}{\log{p}}\to\infty$. Then, uniformly in the choice of residue class $a\bmod{p}$, \[ \sum_{\substack{n \le x \\ n \equiv a\pmod{p} \\ \sigma(n)\not\equiv 0\pmod{p}}}1 \sim \frac{x}{p (\log{x})^{1/(p-1)}}. \]
\end{lem}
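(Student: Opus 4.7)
Proof plan.

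The approach mirrors the two-pronged strategy used to prove Theorem \ref{thm:mainphi}: a Landau--Selberg--Delange (LSD) analysis in the spirit of Lemma \ref{lem:SDphi} handles the regime where $p$ is small, and a sieve-based argument in the arithmetic progression $n \equiv a \pmod p$ (adapting Lemma \ref{lem:phiasymp}) handles the regime where $p$ is larger. The two ranges together cover the full hypothesized range $\log x/\log p \to \infty$.

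For the sieve approach, the primes to be sieved are those with $q \equiv -1 \pmod p$ (for which $p \mid q+1 = \sigma(q)$) rather than $q \equiv 1 \pmod p$. After the substitution $n = a + pk$ (which turns the AP into an interval of length $\sim x/p$ in $k$), each sieve condition $q \nmid n$ becomes $k \not\equiv -ap^{-1} \pmod q$ (specializing to $k \not\equiv 0 \pmod q$ when $a \equiv 0 \pmod p$). Lemma \ref{lem:sievelem} with sieve set $\{q \le x^{1/K}:q\equiv -1\pmod p\}$, combined with Lemma \ref{lem:PN} applied to the class $-1\pmod p$ to evaluate the resulting product of local densities, yields the upper bound $\lesssim x/(p(\log x)^{1/(p-1)})$ on letting $K \to \infty$ slowly. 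The contribution of $n$ with $q^2\mid n$ for some $q\equiv -1\pmod p$ (which survive the sieve but with $\sigma(n)\equiv 0\pmod p$) is $O(x/p \cdot \sum_{q\equiv -1\pmod p}1/q^2) = O(x/p^3)$, negligible. For the matching lower bound, one subtracts those $n$ that survive the sieve but still satisfy $\sigma(n)\equiv 0\pmod p$: either (I) $n$ has a large prime factor $q\equiv -1\pmod p$ with $q>x^{1/K}$, treated exactly as in Lemma \ref{lem:phiasymp} by decomposing $n = AB$ with $A$ the $(-1\bmod p)$-part and sieving the cofactor $B$; or (II) $n$ has some $q^e\|n$ with $e\ge 2$ and $p\mid \sigma(q^e)$ for $q\not\equiv -1\pmod p$, forcing $\ord_p(q)\mid (e+1)$ with $\ord_p(q)\ge 3$, hence $q^{\ord_p(q)}\ge p+1$.

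Main obstacle: Controlling the Type (II) exceptions, which have no counterpart in the $\varphi$ setting of Lemma \ref{lem:phiasymp}. A direct union bound, combining the telescoping estimate $\sum_{e\ge \ord_p(q)-1,\,\ord_p(q)\mid e+1} 1/q^e \le 2/q^{\ord_p(q)-1}$ with the inequality $q^{\ord_p(q)-1}\ge (p+1)/q$, delivers only a Type (II) bound of order $x/p$, matching the main term rather than being $o$-of-it; a more refined analysis (classifying by $d = \ord_p(q)$ and separating small $q$ from large $q$, using that classes with $\ord_p(c)=d$ have smallest representative $\ge (p+1)^{1/d}$) should salvage the sieve when $p$ is reasonably large (say $p$ polynomial in $\log\log x$). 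For slowly-growing $p$ where even this refinement fails, one falls back on the LSD route: writing the count as $\frac{1}{p}\sum_{\chi \bmod p}\overline{\chi(a)}\sum_{n\le x,\,p\nmid \sigma(n)}\chi(n)$ and analyzing the Dirichlet series $\sum_n \chi(n)\mathbf{1}_{p\nmid \sigma(n)}n^{-s}$ via its Euler product, the trivial character supplies the main term while the nontrivial characters contribute a lower order thanks to the sub-square-root bounds on the relevant character/Jacobi sums over $\mathbb{F}_p$ alluded to in the introduction.
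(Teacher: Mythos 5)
Your overall plan (sieve the primes $q\equiv -1\pmod p$ in the progression $n\equiv a\pmod p$, then subtract the exceptional $n$ that survive the sieve but have $p\mid\sigma(n)$, split into ``has a large prime factor $\equiv -1\pmod p$'' and ``has a rogue higher prime power'') matches the paper's proof, which is an adaptation of the proof of Lemma~\ref{lem:phiasymp}. But there are two concrete gaps, and the paper's proof needs no LSD fallback at all.

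\emph{Upper bound gap.} Sieving directly for $n\equiv a\pmod p$ free of primes $q\le x^{1/K}$, $q\equiv -1\pmod p$, does \emph{not} upper-bound the count you want. The condition $\sigma(n)\not\equiv 0\pmod p$ does not force $n$ coprime to such $q$: if $q\equiv -1\pmod p$ and $q^2\| n$, then $\sigma(q^2)=1+q+q^2\equiv 1\pmod p$, so $\sigma(n)\not\equiv 0\pmod p$ is perfectly possible. Your parenthetical remark (``$q^2\mid n$ ... with $\sigma(n)\equiv 0\pmod p$'') has the implication backwards. The paper handles this by factoring $n=AB$ with $A$ the largest divisor supported on primes $\equiv -1\pmod p$; the condition $\sigma(n)\not\equiv 0\pmod p$ forces $A$ to be squarefull, and one then sieves $B$ for each fixed $A\le x^{1/2}$ and sums $1/A$ over squarefull $A$ (which is $1+O(p^{-1/2})$ since any nontrivial $A$ exceeds $(p-1)^2$).

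\emph{Type (II) gap.} You correctly identify the higher prime powers $q^e\|n$, $e\ge 2$, $p\mid\sigma(q^e)$, $q\not\equiv -1\pmod p$ as the genuine new difficulty, and correctly observe that your union bound via $\ord_p(q)$ is not strong enough. The idea you are missing is much simpler than a refinement by $\ord_p(q)$ or a fallback to Selberg--Delange. Since $\sigma(q^e)$ is a positive multiple of $p$, we have $\sigma(q^e)\ge p$; combined with the elementary bound $\sigma(q^e)<2q^e$ this gives $q^e>p/2$. Now let $S$ be the product of all such $q^e\| n$; then $S$ is squarefull and $S>p/2$, and the count of such $n$ is handled by sieving the cofactor $T=n/S$ (which is free of primes $\equiv -1\pmod p$), giving a bound $\ll x/(pS(\log x)^{1/(p-1)})$ per $S$, and then using the standard estimate $\sum_{m>p/2,\ m\ \mathrm{squarefull}}1/m\ll p^{-1/2}$. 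The total is $\ll x/(p^{3/2}(\log x)^{1/(p-1)})$, which is $o$ of the main term. This works in the entire range $\log x/\log p\to\infty$, so no second (LSD) prong is needed for this lemma; the Landau--Selberg--Delange machinery enters only in the later Lemma~\ref{lem:SDs}, which in fact uses the present lemma as an input to evaluate the trivial-character contribution.
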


\begin{proof} The proof is similar to that of Lemma \ref{lem:phiasymp}. First we treat the upper bound. Suppose that $n \le x$, $n\equiv a\pmod{p}$, and $\sigma(n)\not\equiv 0\pmod{p}$. Write $n=AB$, where $A$ is the largest divisor of $n$ composed of primes congruent to $-1\pmod{p}$. Then $A$ is squarefull, $A\equiv \pm 1\pmod{p}$, and $B\equiv \pm a\pmod{p}$ (with matching choices of sign). The number of $n\le x$ with a squarefull divisor exceeding $x^{1/2}$ is at most $x \sum_{m > x^{1/2},~\text{squarefull}} 1/m \ll x^{3/4}$, which is $o(\frac{x}{p (\log{x})^{1/(p-1)}})$ as $x,p, \frac{\log{x}}{\log{p}}$ tend to infinity. So we assume that $A \le x^{1/2}$ and count $B$  corresponding to a given $A$. We have that $B\le x/A$, that $B\equiv \pm a\pmod{p}$ (for a specific choice of sign, determined by $A$), and that $B$ is free of prime factors $q\equiv -1\pmod{p}$. In particular, fixing $K\ge 4$, we have that $B$ is free of prime factors $q\equiv -1\pmod{p}$ with $q\le x^{1/K}$. Since $x^{1/K} \le x^{1/4} \le \frac{x}{Ap}$ when $\frac{\log{x}}{\log{p}}\ge 4$, the sieve bounds the number of these $B$ by
\[ \left(\frac{x}{Ap} \prod_{\substack{q \le x^{1/K}\\q\equiv -1\pmod{p}}}(1-1/q)\right) 
\left(
1+O\left(
\exp\left(
-\frac{1}{2} \frac{\log(x/Ap)}{\log(x^{1/K})}
\right)
\right)
\right),
\]
which (cf.\ the proof of Lemma \ref{lem:phiasymp}) is at most
\[ \frac{x}{Ap(\log{x})^{1/(p-1)}} \left(1+O(\exp(-K/8))\right) \]
when $x,p, \frac{\log{x}}{\log{p}}$ are all large enough (allowed to depend on $K$). The sum of $1/A$ over squarefull positive integers $A\equiv \pm 1\pmod{p}$ is at most $1+\sum_{A\ge p-1,~A\text{ squarefull}}1/A = 1+O(p^{-1/2})$, which is $1+O(\exp(-K/8))$ for large $p$. The upper bound half of the lemma now follows, since $K$ can be taken arbitrarily large.

We start the proof of the lower bound by counting $n \le x$, $n\equiv a\pmod{p}$ with no small prime factor $q\equiv -1\pmod{p}$. Taking ``small'' to mean $q \le x^{1/K}$, where $K\ge 2$ is fixed, the sieve implies that the number of such $n\le x$, when $x$, $p$, and $\frac{\log{x}}{\log{p}}$ are all large, is
\begin{multline*} \Bigg(\frac{x}{p}\prod_{\substack{q\equiv -1\pmod{p} \\ q\le x^{1/K}}}\left(1-\frac{1}{q}\right)\Bigg) \left(1 + O\left(\exp\left(-\frac{1}{2}\frac{\log{(x/p)}}{\log(x^{1/K})}\right)\right)\right)
\\
= \frac{x}{p (\log{x})^{1/(p-1)}} \left(1+O(\exp(-K/4))\right).  \end{multline*}
We now wish to remove from our count those $n$ that survive the sieve of the last paragraph but nonetheless satisfy $\sigma(n)\equiv 0\pmod{p}$. Take an $n$ of this kind. We consider two cases, according to whether or not there is a prime $q$ dividing $n$ with $q\equiv -1\pmod{p}$.

Suppose there is such a prime $q$. Since $n$ survived our sieve, necessarily $q> x^{1/K}$. Let $A$ be the largest divisor of $n$ composed of primes $q\equiv -1\pmod{p}$ and write $n=AB$. Then $A\equiv \pm 1\pmod{p}$ and $B\equiv \pm a\pmod{p}$  (for the same choice of sign). As in the proof of Lemma \ref{lem:phiasymp} (see \eqref{eq:numAB}), the number of $A$ corresponding to a given $B$ is 
$$\ll \frac{Kx}{pB\log{x}}.$$ 
(As usual, we assume all of $x,p,\frac{\log{x}}{\log{p}}$ are large.) We now estimate $\sum 1/B$. For each $T\ge p^2$, the sieve (along with Lemma \ref{lem:PN}) implies that the number of $B \le T$, $B\equiv \pm a\pmod{p}$, with $B$ free of prime factors $q\equiv -1\pmod{p}$ is 
\[ \ll \frac{T/p}{\log(T/p)^{1/(p-1)}} \ll \frac{T}{p (\log{T})^{1/(p-1)}}. \]
Summing by parts, 
\[ \sum \frac{1}{B} \ll 1+ \frac{1}{p} (\log{x})^{1-\frac{1}{p-1}}. \]
(The ``$1$'' bounds the contribution of those $B \le p^2$.) 
Hence, the count of corresponding $n$ is $$ \ll \frac{Kx}{p\log{x}}+\frac{Kx}{p^2 (\log{x})^{1/(p-1)}}, $$
which is $o(\frac{x}{p (\log{x})^{1/(p-1)}})$ as $x, p, \frac{\log{x}}{\log{p}}$ tend to infinity.

Now suppose that $n$ is entirely free of primes $q\equiv -1\pmod{p}$. In that case, since $p\mid \sigma(n)$, there must be a prime power $q^e\parallel n$, $e>1$, for which $p \mid\sigma(q^e)$. Let $S$  be the product of all such $q^e\parallel n$. If $S\ge x^{1/2}$, then $S$ is a squarefull divisor of $n$ exceeding $x^{1/2}$; as at the start of this proof, this puts $n$ in a set of size $O(x^{3/4})$, which is $o(\frac{x}{p (\log{x})^{1/(p-1)}})$. So suppose that $S \le x^{1/2}$ and write $n=ST$. Then $T \le x/S$, $T \equiv aS^{-1}\pmod{p}$, and $T$ is free of primes $q\equiv -1\pmod{p}$. By another application of the sieve, the number of possibilities for $T$  given $S$ is 
\[ \ll \frac{x}{pS} \prod_{\substack{q\equiv -1\pmod{p} \\ q\le \frac{x}{pS}}}\left(1-\frac{1}{q}\right) \ll \frac{x}{pS (\log(x/pS))^{1/(p-1)}}\ll \frac{x}{pS (\log{x})^{1/(p-1)}}, \]
when $x, p, \frac{\log{x}}{\log{p}}$ are all large. To estimate $\sum 1/S$, note that $\sigma(q^e) < 2q^e$ for every prime power $q^e$, so that if $p\mid\sigma(q^e)$, then $q^e > \frac{1}{2}p$. It follows that $\sum 1/S \le \sum_{\substack{S > \frac{1}{2}p,~S\text{ squarefull}}} 1/S \ll 1/p^{1/2}$. So only $O(\frac{x}{p^{3/2} (\log{x})^{1/(p-1)}})$ values of $n$ arise this way, and this is $o(\frac{x}{p (\log{x})^{1/(p-1)}})$. 

The lower bound half of the lemma follows by combining the results of the previous three paragraphs, noting again that $K$ can be as large as we like.
\end{proof}

\subsection{Equidistribution when $p\le (\log_2{x})^{2-\delta}$}
The proof of the next lemma, concerning the joint distribution of $n$ and $\sigma(n)$ mod $p$, is deferred to \S\ref{sec:SD}.
\begin{lem}\label{lem:SDs} Fix $\delta > 0$. Suppose that $p, x\to \infty$, with $p \le (\log_2 x)^{2-\delta}$. The number of $n\le x$ with $n\equiv u\pmod{p}$ and $\sigma(n)\equiv v\pmod{p}$ is 
\[ \sim \frac{x}{p^2 (\log{x})^{1/(p-1)}}, \]
uniformly in the choice of integers $u,v$ coprime to $p$.
\end{lem}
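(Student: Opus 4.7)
The plan is to detect the two congruences using orthogonality of Dirichlet characters modulo $p$. Since $u,v$ are coprime to $p$, the conditions $n \equiv u$ and $\sigma(n) \equiv v \pmod{p}$ force $\gcd(n\sigma(n),p)=1$, and so
\[
\#\{n \le x : n \equiv u,\ \sigma(n) \equiv v \pmod{p}\} = \frac{1}{\phi(p)^2}\sum_{\chi_1,\chi_2 \bmod p} \overline{\chi_1(u)\chi_2(v)}\, S_{\chi_1,\chi_2}(x),
\]
where $S_{\chi_1,\chi_2}(x) := \sum_{n \le x} \chi_1(n)\chi_2(\sigma(n))$. The principal pair $(\chi_0,\chi_0)$ contributes $\phi(p)^{-2}\cdot\#\{n\le x : \gcd(n\sigma(n),p)=1\}$; summing Lemma~\ref{lem:sigmaasymp} over $u \in (\Z/p\Z)^*$ gives the latter as $(1+o(1))(p-1)x/(p(\log x)^{1/(p-1)})$, and dividing by $\phi(p)^2 = (p-1)^2$ produces precisely the claimed main term $\sim x/(p^2(\log x)^{1/(p-1)})$.

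For each nonprincipal pair I would analyze the Dirichlet series $F(s,\chi_1,\chi_2) := \sum_n \chi_1(n)\chi_2(\sigma(n))/n^s$. Its Euler product gives, for $\operatorname{Re}(s) > 1/2$,
\[
\log F(s,\chi_1,\chi_2) = \sum_q \frac{\chi_1(q)\chi_2(q+1)}{q^s} + O(1).
\]
Partitioning primes by residue class mod $p$ and invoking the expansion $\sum_{q \equiv a \pmod p} q^{-s} = \frac{1}{p-1}\log\frac{1}{s-1} + O(1)$ as $s \to 1^+$ (a consequence of Siegel--Walfisz applied to $L(s,\psi)$ for $\psi \bmod p$), one finds that $F(s,\chi_1,\chi_2)$ has a Selberg--Delange singularity at $s=1$ with complex exponent $J(\chi_1,\chi_2)/(p-1)$, where
\[
J(\chi_1,\chi_2) := \sum_{\substack{a \bmod p \\ a \not\equiv 0,-1}} \chi_1(a)\chi_2(a+1)
\]
is (up to the factor $\chi_1(-1)$) a Jacobi sum over $\mathbb{F}_p$. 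The uniform Landau--Selberg--Delange theorem of Chang--Martin \cite{CM20} then delivers
\[
|S_{\chi_1,\chi_2}(x)| \ll x(\log x)^{|J(\chi_1,\chi_2)|/(p-1) - 1},
\]
uniformly in $p$ and the characters for the range in question.

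A direct computation handles the three subfamilies ($O(p)$ pairs each) in which exactly one of $\chi_1, \chi_2, \chi_1\chi_2$ is principal: there one checks that $|J|=1$. For all remaining nonprincipal pairs Weil's bound on Jacobi sums gives $|J| \le \sqrt{p}$. Summing over the $\phi(p)^2-1$ nonprincipal pairs and dividing by $\phi(p)^2$, the total error is
\[
O\bigl(p^{-1}x(\log x)^{1/(p-1)-1}\bigr) + O\bigl(x(\log x)^{\sqrt{p}/(p-1)-1}\bigr),
\]
and both terms are $o\bigl(x/(p^2(\log x)^{1/(p-1)})\bigr)$ with ample slack once $p \le (\log_2 x)^{2-\delta}$. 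The main subtlety lies in the Weil bound: the trivial estimate $|J|\le p$ would replace $\sqrt{p}/(p-1)$ above by $1$, erasing any saving over the main term and restricting the argument to $p$ of size roughly $\log_2 x$, just short of the range where the Banks--Harman--Shparlinski anatomical method takes over — exactly the point flagged in the introduction.
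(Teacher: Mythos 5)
Your proposal follows the same route as the paper: orthogonality over Dirichlet characters mod $p$, the principal pair controlled by summing Lemma~\ref{lem:sigmaasymp} over $u$ coprime to $p$, and a Selberg--Delange analysis of $F(s,\chi_1,\chi_2)$ for nonprincipal pairs, with a Jacobi sum determining the exponent of the singularity at $s=1$ and the Weil bound $|J|\le\sqrt{p}$ providing the crucial saving. The step you leave vague is exactly the one where the hypothesis $p\le(\log_2 x)^{2-\delta}$ is genuinely used, and as written your error estimate is not justified.

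The bound $|S_{\chi_1,\chi_2}(x)|\ll x(\log x)^{|J|/(p-1)-1}$ cannot hold with an implied constant independent of $p$. Proposition~\ref{prop:CM} carries a multiplier $M$ in its error term, where $M$ bounds $G(s;z)=F(s)\zeta(s)^{-z}$ on a zero-free region of half-width $c_0$. To analytically continue $G$ one factors through $\prod_{\rho}L(s,\rho)^{a_\rho}$ with the $a_\rho$ chosen so that $\chi_1(n)\chi_2(n+1)=\sum_\rho a_\rho\rho(n)$; each $a_\rho$ is (up to sign and a factor $p-1$) itself a Jacobi sum, so that $\sum_\rho|a_\rho|\ll\sqrt p$ --- you invoke Weil only for $a_\epsilon=J/(p-1)$, but you need it for all the $a_\rho$. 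Combined with the $L$-function bounds of Lemma~\ref{lem:dircharbound}, this forces $M=\exp\bigl(O(\sqrt p\log p)\bigr)$ and $c_0\asymp p^{-1/2}(\log p)^{-2}$. The per-pair bound is therefore $x(\log x)^{-3/5}\exp\bigl(O(\sqrt p\log p)\bigr)$ for $p>10$, and absorbing the exponential needs $\sqrt p\log p = o(\log_2 x)$ --- exactly the role of $\delta>0$. Your tally, $O\bigl(p^{-1}x(\log x)^{1/(p-1)-1}\bigr)+O\bigl(x(\log x)^{\sqrt p/(p-1)-1}\bigr)$ with ``ample slack,'' would remain negligible even for $p$ a fixed power of $\log x$, which is too strong; the missing $M$-factor is what prevents that range. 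To repair the argument, write $F(s,\chi_1,\chi_2)=\zeta(s)^{a_\epsilon}G_{\chi_1,\chi_2}(s)$, verify that $G_{\chi_1,\chi_2}$ is holomorphic and satisfies $|G_{\chi_1,\chi_2}(s)|\le M(1+|\tau|)^{1/2}$ on the $c_0$-region, and only then apply Proposition~\ref{prop:CM} with these explicit $c_0$ and $M$.
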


With Lemmas \ref{lem:sigmaasymp} and \ref{lem:SDs} in hand, we can deduce Theorem \ref{thm:mains} in the range $p \le (\log_2 x)^{2-\delta}$ ($\delta > 0$ fixed). Notice that in this range, it makes no difference if we restrict the inputs of $s(\cdot)$ to composite $n$, since $x/\log{x}=o(x/p)$. 

We can express the count of $n\le x$ with $s(n)\equiv a\pmod{p}$ as \begin{equation}\label{eq:uvsplit} \sum_{\substack{u,v\pmod{p}\\u+v\equiv a\pmod{p}}} N_{u,v;\, p}(x),\end{equation}
where
\[ N_{u,v;\,p}(x):= \sum_{\substack{n \le x \\ n\equiv -u\pmod{p} \\ \sigma(n)\equiv v\pmod{p}}} 1. \]

First, suppose that $a\not\equiv 0\pmod{p}$. Then there are $p-2$ pairs $(u,v)$ summing to $a$ mod $p$ with $u,v\not\equiv 0 \pmod{p}$. By Lemma \ref{lem:SDs}, $N_{u,v;\,p}(x)  \sim \frac{x}{p^2 (\log{x})^{1/(p-1)}}$ for each, resulting in a combined contribution to \eqref{eq:uvsplit} of $(1+o(1)) \frac{x}{p(\log{x})^{1/(p-1)}}$. The two remaining pairs are $(0,a)$ and $(a,0)$. Suppose $n$ is counted by $N_{0,a;\, p}(x)$. Write $n=p k$. Then $\sigma(k)\equiv \sigma(n)\equiv a\pmod{p}$. Now taking cases according to whether $p\nmid k$ or $p\mid k$, and writing $k=pk'$ in the latter, we find that
\[ N_{0,a;\, p}(x) \le \sum_{\substack{k \le x/p \\ k\not\equiv 0\pmod{p} \\ \sigma(k)\equiv a\pmod{p}}}1  + \sum_{\substack{k' \le x/p^2 \\ \sigma(k')\not\equiv0\pmod{p}}}1. \]
Here the first sum can be estimated by Lemma \ref{lem:SDs} while the second succumbs to Lemma \ref{lem:sigmaasymp}; the sums total to $o(\frac{x}{p(\log{x})^{1/(p-1)}})$. A further application of Lemma \ref{lem:SDs} shows that $$ N_{a,0;\, p}(x) = \frac{x}{p} - (1+o(1))\frac{x}{p(\log{x})^{1/(p-1)}}.$$ 
Combining our tallies, the $n$ with $s(n)\equiv a\pmod{p}$ make up a set of size $x/p + o(\frac{x}{p(\log{x})^{1/(p-1)}})$, which is $(1+o(1))x/p$, as desired.

The argument is similar when $a\equiv 0\pmod{p}$. In that case, there are $p-1$ contributions of size $(1+o(1))  \frac{x}{p^2 (\log{x})^{1/(p-1)}}$ coming from the pairs $(u,-u)$ with $u\not\equiv 0\pmod{p}$, for a total of $(1+o(1))  \frac{x}{p (\log{x})^{1/(p-1)}}$. It remains to consider $N_{0,0;\, p}(x)$. Writing the integers $n$ counted by $N_{0,0;\, p}(x)$ in the form $p^r k$, where $p\nmid k$, we see using Lemma \ref{lem:sigmaasymp} that
\begin{align*} N_{0,0;\, p}(x) &= \sum_{\substack{k \le x/p\\ k\not\equiv 0\pmod{p} \\ \sigma(k)\equiv 0\pmod{p}}} 1 + O(x/p^2) \\
&= (p-1)\left(\frac{x}{p^2} - (1+o(1)) \frac{x}{p^2 (\log{x})^{1/(p-1)}}\right) + O(x/p^2) \\
&= x/p - (1+o(1)) \frac{x}{p (\log{x})^{1/(p-1)}} + O(x/p^2).
\end{align*}
Tallying it all up, we get a total of $(1+o(1))x/p$ in this case as well. This completes the proof of Theorem \ref{thm:mains} when $p \le (\log_2 x)^{2-\delta}$. 

\subsection{Equidistribution when $p/\log_2{x}\to\infty$}
For the remainder of this section, we work in the range where both $x$ and $p/\log_2{x}$ tend to infinity. We continue to assume that $p \le (\log{x})^A$, where $A>0$ is fixed. 

Suppose $n$ is composite with $1<n\le x$ and write $n=mP$ where $P=P^{+}(n)$. Set $L:=\exp(\sqrt{\log x})$. As in \S\ref{sec:eulerphi}, we can assume that $P > L$ and $P\nmid m$, at the cost of $o(x/p)$ exceptions. Then $s(n) = (P+1)\sigma(m)-Pm = Ps(m)+\sigma(m)$, and we have $s(n)\equiv a\pmod{p}$ precisely when $Ps(m)\equiv a-\sigma(m)\pmod{p}$. Now writing $L_m = \max\{L,P^{+}(m)\}$, we see that
\[ \sum_{\substack{1 < n \le x\\ n~\text{composite} \\ s(n)\equiv a\pmod{p}}} 1 = \Bigg(\sum_{\substack{1 < m \le x \\ s(m)\equiv 0 \pmod{p}\\\sigma(m)\equiv a\pmod{p}}} \sum_{L_m <P \le x/m}1 + \sum_{\substack{1 < m \le x \\ s(m)\not\equiv 0 \pmod{p}\\\sigma(m)\not\equiv a\pmod{p}}} \sum_{\substack{L_m <P \le x/m \\ P \equiv a_{p,m} \pmod{p}}} 1\Bigg) + o(x/p),  \]
where $a_{p,m}\bmod{p}$ is  determined by the congruence $a_{p,m} \cdot s(m)\equiv a-\sigma(m)\pmod{p}$. Proceeding in exact analogy with \S\ref{sec:eulerphi}, we may express the right-hand side as
\begin{equation}\label{eq:exactanalogy} \Bigg(\sum_{\substack{m,P:~mP\le x\\ m>1,~P \ge P^{+}(m) \\ s(m)\equiv 0 \pmod{p}\\ \sigma(m)\equiv a\pmod{p}}} 1 + \frac{1}{p-1} \sum_{\substack{m,P:~mP\le x\\ m>1,~P \ge P^{+}(m) \\ s(m)\not\equiv 0 \pmod{p}\\ \sigma(m)\not\equiv a\pmod{p}}} 1\Bigg)+o(x/p). \end{equation}

We proceed to show that the first of the two sums in \eqref{eq:exactanalogy} is $o(x/p)$. 

Take first the case when $p \mid a$. If $m,P$ are counted by this first sum, then $m=\sigma(m)-s(m) \equiv a-0\equiv 0\pmod{p}$, so that $p\mid m$. Write $m = p^r u$, where $p \nmid u$. Then $p\mid \sigma(u)$, and so $q^e\parallel u$ for some prime power $q^e$ with $p\mid \sigma(q^e)$. It follows that $n:=mP$ is an integer not exceeding $x$ divisible by $p^r q^e$. Hence, in this case our sum is at most  
\begin{align*} x\sum_{r\ge 1} \frac{1}{p^r} \sum_{\substack{q^e \le x\\p \mid \sigma(q^e)}}\frac{1}{q^e} &\ll \frac{x}{p} \Bigg(\sum_{\substack{q\le x \\ q\equiv -1\pmod{p}}}\frac{1}{q} + \sum_{\substack{q^e\le x,~e>1 \\ p\mid \sigma(q^e)}}\frac{1}{q^e} \Bigg) \\
&\ll \frac{x}{p}\Bigg(\frac{\log_2{x}}{p-1} + \frac{\log{p}}{p} + \sum_{\substack{m\text{ squarefull} \\ m > p/2}}\frac{1}{m}\Bigg),
\end{align*}
which is $o(x/p)$. 

Now assume $p\nmid a$. Fix $K> 2$ (which  later will  be taken large). We first bound the contribution to our sum from those cases where $m \le x^{1/K}$ or $m\ge x^{1-1/K}$. Since $\sigma(m)\equiv a\pmod{p}$ and $s(m)\equiv 0\pmod{p}$, we have that $m= \sigma(m)-s(m)\equiv a\pmod{p}$. Moreover, since $m>1$, we have $s(m)>0$, and so $\sigma(m) > s(m) \ge p$. Since $\sigma(m)\ll m\log_2{(3m)}$ (see, e.g., \cite[Theorem 323, p.\ 350]{HW08}), we deduce that $m \gg p/\log_2{p}$. It follows that the cases where $m\le x^{1/K}$ contribute
\begin{multline*} \ll \sum_{\substack{1 < m \le x^{1/K} \\ \sigma(m) \equiv a\pmod{p} \\ p \mid s(m)}} \pi(x/m) \ll  \frac{x}{\log{x}} \sum_{\substack{1 < m \le x^{1/K} \\ \sigma(m) \equiv a\pmod{p} \\ p \mid s(m)}} \frac{1}{m}  \\ \ll \frac{x}{\log{x}} \Bigg(\frac{\log_2{p}}{p} + \sum_{\substack{p < m \le x^{1/K} \\ m\equiv a\pmod{p}}}\frac{1}{m}\Bigg) \ll  \frac{x}{\log{x}} \Bigg(\frac{\log_2{p}}{p} + \frac{\log{x}}{pK}\Bigg), 
\end{multline*}
which is $o(x/p) + O(\frac{x}{pK})$.  If instead $m \ge x^{1-1/K}$, then $P \le x^{1/K}$. In that case it is convenient to count values of $m$ corresponding to a given $P$. We have that $m\equiv a\pmod{p}$, that $m\le x/P$, and that $m$ has no prime factors exceeding $P$. By the sieve, the number of possibilities for $m$ is $\ll \frac{x}{Pp} \prod_{P < q \le x/Pp}(1-1/q) \ll \frac{x}{p}\frac{\log{P}}{P\log{x}}$. (We assume here, and below, that  $x$ and $p/\log_2 x$ are large, in a manner allowed to depend on $K$, and we keep in mind that $p\le (\log{x})^A$.)
 Summing on $P \le x^{1/K}$, we see that the number of $n$ arising this way is $O(\frac{x}{pK})$.  

Now suppose that $x^{1/K} < m < x^{1-1/K}$. For each such $m$, the number of corresponding $P$ is at most $$ \pi(x/m) \ll \frac{Kx}{m\log{x}}.$$ We shall use this bound to justify several further assumptions on $m$. Since $p \mid s(m)$, we know that $m$ is not prime. Write $$m=m_0 P_1 P_2,$$ where $P_2 = P^{+}(m)$ and $P_1 = P^{+}(m/P_2)$. 

The number of $n:=mP$ corresponding to $m$ with $P_2 \le x^{1/K^3}$ is $$ \ll \frac{Kx}{\log{x}} \sum_{\substack{X^{1/K} < m \le x\\ m \equiv a\pmod{p} \\ P^{+}(m) \le X^{1/K^3}}} \frac{1}{m}.$$ By the sieve, for each $T \ge x^{1/K}$, the number of $m\le T$, $m\equiv a\pmod{p}$, with $P^{+}(m) \le x^{1/K^3}$ is $\ll\frac{T}{p} \prod_{x^{1/K^3} < q \le T/p} (1-1/q) \ll \frac{T}{pK^2}$. Hence, the sum of $1/m$ in the last display is $O(\frac{\log{x}}{pK^2})$, and the number of corresponding $n$ is  $O(\frac{x}{pK})$. Suppose $P_2 > x^{1/K^3}$ but $P_1 \le x^{1/K^3}$. Then $m=uP_2$ where $u:=m_0 P_1$ is such that $P^{+}(u) \le x^{1/K^3}$. Thus,
\begin{align*} \sum\frac{1}{m} \le \bigg(\sum_{\substack{P^{+}(u) \le x^{1/K^3} \\ p\nmid u}}  \frac{1}{u}\sum_{\substack{x^{1/K^3} < P_2\le x \\ P_2 \equiv u^{-1} a\pmod{p}}}\frac{1}{P_2}\bigg) \ll  \frac{\log{K}}{p} \sum_{\substack{P^{+}(u) \le x^{1/K^3} }} \frac{1}{u} \\ = \frac{\log{K}}{p} \prod_{q \le x^{1/K^3}} (1-1/q)^{-1} \ll \frac{\log{x}}{p} \cdot \frac{\log{K}}{K^3}.\end{align*}
Here the sum on $P_2$ has been estimated with the Brun--Titchmarsh inequality and partial summation (direct use of Lemma \ref{lem:PN} would give a slightly worse estimate). Hence, the number of corresponding $n$ is $O(\frac{\log{K}}{K^2} \frac{x}{p})$, which is $O(\frac{x}{pK})$.

Now suppose that $P_1, P_2 > x^{1/K^3}$. If $P_1=P_2$ or $P_1\mid m_0$, then $n=m_0 P_1 P_2 P$ is divisible by the square of a prime exceeding $x^{1/K^3}$. The number of such $n$ is $O(x^{1-1/K^3})$, which is $o(x/p)$. 

Thus, at the cost of $o(\frac{x}{p}) + O(\frac{x}{pK})$ exceptions, we may assume that $x^{1/K} < m < x^{1-1/K}$, that $P_2 > P_1 > P^{+}(m_0)$, and that $P_1 > x^{1/K^3}$. The congruence $\sigma(m)\equiv a\pmod{p}$ implies that $\sigma(m_0)$ is coprime to $p$, and that
\[ (P_1+1)(P_2+1)\equiv \sigma(m_0)^{-1} a \pmod{p}. \]
Also, $m \equiv a\pmod{p}$  implies that $p\nmid m_0$ and that
\[ P_1 P_2 \equiv m_0^{-1}a \pmod{p}. \]
For each $m_0$, the last two displayed congruences determine $O(1)$ possibilities for the pair of residue classes $(P_1 \bmod{p}, P_2\bmod{p})$. Moreover, for each pair $(u\bmod{p},v\bmod{p})$, the sum of $1/m$ taken over the corresponding values of $m=m_0 P_1 P_2$ does not exceed
\[ \sum_{m_0 \le x} \frac{1}{m_0} \sum_{\substack{x^{1/K^3} < P_1 \le x \\ P_1 \equiv u\pmod{p}}}\frac{1}{P_1} \sum_{\substack{x^{1/K^3} < P_2 \le x \\ P_2 \equiv v\pmod{p}}}\frac{1}{P_2} \ll \frac{(\log{K})^2}{p^2} \log{x}.  \]
Hence, the number of these remaining $n$ is 
\[ \ll \frac{K(\log{K})^2}{p} \frac{x}{p}, \]
which is $o(x/p)$.

Collecting the results of the last several paragraphs, we conclude that for each fixed $K$ the first of the sums in \eqref{eq:exactanalogy} is $ O(\frac{x}{pK})$, provided $x$ and $p/\log_2{x}$ are large enough (in terms of $K,A$). Since $K$ may be taken large, this first sum is $o(x/p)$. 

We have thus proved: Let $x$ and $p/\log_2{x}$ tend to infinity, with $p \le (\log{x})^A$ for a fixed $A>0$. Uniformly in the choice of $a\in \Z$, 
\begin{equation*}
\sum_{\substack{1 < n \le x\\ n~\text{composite} \\ s(n)\equiv a\pmod{p}}} 1 = \Bigg(\frac{1}{p-1} \sum_{\substack{m,P:~mP\le x\\ m>1,~P \ge P^{+}(m) \\ s(m)\not\equiv 0 \pmod{p}\\ \sigma(m)\not\equiv a\pmod{p}}} 1\Bigg)+o(x/p). \end{equation*}

Bounding the right-hand sum trivially yields the following analogue of Proposition  \ref{prop:philargecaseupper}.

\begin{prop}\label{prop:slargecaseupper} Fix $A>0$. The number of composite $n\le x$ for which $s(n)\equiv a\pmod{p}$ is $\lesssim x/p$, as $x,  \frac{p}{\log_2{x}} \to\infty$, uniformly in the choice of $a\in \Z$ and prime $p\le (\log{x})^A$.
\end{prop}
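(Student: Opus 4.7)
The plan is to mirror the proof of Proposition \ref{prop:philargecaseupper}: the bulk of the work is already packaged in the fundamental identity displayed immediately above the proposition, which asserts, uniformly in $a\in\Z$,
\[ \sum_{\substack{1<n\le x\\ n~\text{composite}\\ s(n)\equiv a\pmod{p}}} 1 = \frac{1}{p-1}\sum_{\substack{m,P:~mP\le x\\ m>1,~P\ge P^{+}(m)\\ s(m)\not\equiv 0\pmod{p}\\ \sigma(m)\not\equiv a\pmod{p}}} 1 + o(x/p). \]
Given this, the only task remaining is to bound the double sum on the right from above.

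The key observation is that this double sum is bounded by the total number of pairs $(m,P)$ with $mP\le x$, $m>1$, and $P\ge P^{+}(m)$, since dropping the two congruence conditions only enlarges the sum. Every integer $n>1$ has exactly one such representation, namely $P=P^{+}(n)$ and $m=n/P$, so the unrestricted count is at most $x$. Plugging this trivial estimate back, the right-hand side is at most $x/(p-1)+o(x/p)=(1+o(1))x/p$, which is the claimed uniform upper bound $\lesssim x/p$.

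There is no substantive obstacle at this stage; the real difficulty — setting up the fundamental identity via smooth-number estimates, Siegel--Walfisz, and the Banks--Harman--Shparlinski-style anatomical case analysis based on the largest prime factor of $m=n/P^{+}(n)$ — was already carried out in the preceding subsection. The present proposition is simply the $s(n)$-analogue of Proposition \ref{prop:philargecaseupper}, and its proof is correspondingly nothing more than a one-line trivial bound applied to an identity already in hand.
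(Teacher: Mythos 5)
Your proposal is correct and matches the paper exactly: the paper derives the same fundamental identity in the paragraphs preceding the proposition and then (in a single sentence) bounds the right-hand sum trivially by $x$, obtaining $x/(p-1)+o(x/p)\lesssim x/p$. Nothing further is needed.
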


The analogue of Proposition \ref{prop:philargecaselower} can now be established.  We use in its proof that Proposition \ref{prop:philargecaseupper} still holds if $\phi$ is replaced by $\sigma$. In fact, our proof of Proposition \ref{prop:philargecaseupper} applies to $\sigma$ almost verbatim (a few ``$-$'' signs change to ``$+$'').

\begin{prop}\label{prop:slargecaselower} Fix $A>0$. The number of composite $n\le x$ for which $s(n)\equiv a\pmod{p}$ is $\gtrsim x/p$, as $x,  \frac{p}{\log_2{x}} \to\infty$, uniformly in the choice of $a\in \Z$ and prime $p\le (\log{x})^A$.
\end{prop}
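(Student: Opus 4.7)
Starting from the identity
\begin{equation*}
\sum_{\substack{1 < n \le x\\ n~\text{composite} \\ s(n)\equiv a\pmod{p}}} 1 = \frac{1}{p-1} \sum_{\substack{m,P:~mP\le x\\ m>1,~P \ge P^{+}(m) \\ s(m)\not\equiv 0 \pmod{p}\\ \sigma(m)\not\equiv a\pmod{p}}} 1 + o(x/p)
\end{equation*}
derived at the close of the previous subsection, my plan is to mimic the proof of Proposition \ref{prop:philargecaselower}. The total number of pairs $(m,P)$ with $m>1$, $mP\le x$, and $P\ge P^+(m)$ equals $x + O(1)$, and $(p-1)/p = 1 + o(1)$ in our range, so the bound $\gtrsim x/p$ will follow once we show that the excluded pairs --- those with $s(m) \equiv 0\pmod p$ or $\sigma(m) \equiv a\pmod p$ --- number $o(x)$. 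Writing $B_1, B_2$ for the sizes of these two exceptional sets, it suffices to prove $B_1, B_2 = o(x)$.

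For $B_1$, the key trick is to sum the already-established bound on the first sum in \eqref{eq:exactanalogy} over all residues mod $p$. For each $a'\bmod p$, write $F(a')$ for the first sum with $a$ replaced by $a'$; the earlier argument showed $F(a') = o(x/p)$, and inspection (the $(x/p)(\log_2 x/p + \log p/p + p^{-1/2})$ bound when $p \mid a'$, and the parameter-$K$ chain of reductions producing at most $C_A \cdot x/(pK)$ when $p\nmid a'$, with $K$ free to be taken $\to \infty$) shows this bound holds uniformly in $a'$. Consequently $B_1 = \sum_{a'\bmod p} F(a') \le p \cdot o(x/p) = o(x)$.

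For $B_2$ the strategy follows the treatment of \eqref{eq:phi2}. Pairs with $m \le L := \exp(\sqrt{\log x})$ contribute $O(x/\sqrt{\log x}) = o(x)$ regardless of the $\sigma(m)$ condition, so we restrict to $m > L$, equivalently $P \le x/L$. For each such $P$ we bound $\#\{m \le x/P: \sigma(m) \equiv a \pmod p\}$: when $p\nmid a$, the $\sigma$-analog of Proposition \ref{prop:philargecaseupper} (applicable since $p \le (\log x)^A \le (\log(x/P))^{2A}$) gives $\lesssim (x/P)/p$, and summing over $P\le x/L$ via $\sum_{P\le y} 1/P \ll \log_2 y$ bounds $B_2$ by $O(x\log_2 x/p) = o(x)$; when $p\mid a$, the divisibility $\sigma(m)\mid \sigma(mP)$ embeds the bad set into $\{n\le x: p\mid \sigma(n)\}$, whose size is $\lesssim x\log_2 x/p = o(x)$ by summing Lemma \ref{lem:sigmaasymp} across the $p-1$ coprime residue classes of $n$ mod $p$. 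The main obstacle is verifying the uniformity claim in the $B_1$ step, but this is automatic since the constants in the earlier $o(x/p)$ derivation depend only on $A$ (and on the auxiliary parameter $K$, eventually sent to infinity).
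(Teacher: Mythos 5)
Your proposal is correct and follows the same overall strategy as the paper: start from the fundamental relation, note that the total pair count is $\sim x$ (the paper writes $x-\pi(x)+O(1)$, which matters not at all since $\pi(x)=o(x)$; your ``$x+O(1)$'' is a harmless slip), and show that the two exceptional sets each have size $o(x)$. Your treatment of $B_2$ (pairs with $\sigma(m)\equiv a\pmod p$) is the same as the paper's in both subcases. The one genuine difference is your treatment of $B_1$ (pairs with $s(m)\equiv 0\pmod p$): the paper applies Proposition~\ref{prop:slargecaseupper} to bound $\#\{m\le T: s(m)\equiv 0\pmod p\}\ll T/p$ for $L\le T\le x$ and then sums over $P$, while you instead sum the already-established uniform bound on the first sum in \eqref{eq:exactanalogy} over all residues $a'\bmod p$ for $\sigma(m)$. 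Your alternative is valid, and the uniformity in $a'$ that it requires does hold (the parameter-$K$ reductions in the paper have all implied constants depending only on $A$ and $K$). This route is a touch more direct for that piece, at the cost of making the $B_1$ argument logically independent of Proposition~\ref{prop:slargecaseupper}, which the paper prefers to reuse.

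Two small points worth tidying if you were to write this up carefully. First, Lemma~\ref{lem:sigmaasymp} holds for all $p$ residue classes $a\bmod p$, not just the $p-1$ coprime ones, though this does not change the asymptotic $x\log_2 x/p$. Second, the implication $\sigma(m)\mid\sigma(mP)$ requires $P\nmid m$; the paper discharges this by noting the cost of imposing $P\nmid m$ is $O(x/L)=o(x)$, and you should make the same remark explicit.
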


\begin{proof} Since $\displaystyle\sum_{\substack{m,P:~mP\le x\\ m>1,~P \ge P^{+}(m)}} 1 = x-\pi(x) + O(1) \sim x$, it will suffice to show that both
\begin{equation}\label{eq:slowerA}\sum_{\substack{m,P:~mP\le x\\ m>1,~P \ge P^{+}(m) \\ \sigma(m)\equiv a\pmod{p}}} 1 = o(x) \end{equation}
and
\begin{equation}\label{eq:slowerB} \sum_{\substack{m,P:~mP\le x\\ m>1,~P \ge P^{+}(m) \\ s(m)\equiv 0 \pmod{p}}} 1 = o(x). \end{equation}

Let $L:=\exp(\sqrt{\log x})$. Imitating the argument  for \eqref{eq:phi2} in the proof of Proposition \ref{prop:philargecaselower}, we see that \eqref{eq:slowerA} and \eqref{eq:slowerB} follow if for all $T$ with $L \le T \le x$,
\[ \sum_{\substack{m\le T \\ \sigma(m)\equiv a\pmod{p}}} 1 \ll \frac{T}{p}, \quad \sum_{\substack{m\le T \\ s(m)\equiv 0\pmod{p}}} 1 \ll \frac{T}{p}. \]
The second estimate is a consequence of Proposition \ref{prop:slargecaseupper}, while when $p\nmid a$, the first  estimate follows from the $\sigma$-analogue of Proposition \ref{prop:philargecaseupper}. 

To prove \eqref{eq:slowerA} when $p\mid a$, we mimic the proof of \eqref{eq:phi1}. The sum on the left of \eqref{eq:slowerA} changes by $o(x)$ if we impose the additional constraint that $P\nmid m$. (In fact, our work above shows that the change is $O(x/L)$.) Then for the numbers $n=mP$ being counted here, $p \mid \sigma(m)\mid \sigma(mP)$, and so $n\le x$ is such that $p \mid \sigma(n)$. Lemma \ref{lem:sigmaasymp} now places $n$ in a set of size $o(x)$.
\end{proof}

Propositions \ref{prop:slargecaseupper} and \ref{prop:slargecaselower} complete the proof of Theorem \ref{thm:mains}.

\section{Proofs of Lemma \ref{lem:SDphi} and Lemma \ref{lem:SDs} by the method of Landau--Selberg--Delange}\label{sec:SD}
In this section, we supply the promised proofs of Lemmas \ref{lem:SDphi} and \ref{lem:SDs}, by the method of Landau--Selberg--Delange.  We use a recent formulation of that method due to Chang and Martin \cite{CM20}, which is based on Tenenbaum's treatment in \cite[Chapter II.5]{tenenbaum15} but (crucially for us) more explicit about the dependence on certain parameters.

\subsection{Setup} We follow \cite{CM20} in setting $\log^{+}{y} = \max\{0,\log{y}\}$, with the convention that $\log^{+}{0}=0$. We write complex numbers $s$ as $s=\sigma+i\tau$.\footnote{The distinction between $\sigma$ as the real part of a complex number and $\sigma$ as the sum-of-divisors function will be clear from context.} 

For a complex number $z$ and positive real numbers $c_0, \delta$, and $M$ satisfying $\delta \leq 1$, we say that the Dirichlet series $F(s)$ has \textsf{property $\mathcal P(z; c_0, \delta, M)$} if 
$$G(s; z):= F(s)\zeta(s)^{-z}$$
continues analytically for $\sigma \ge 1-c_0/(1+\log^{+}|\tau|)$, wherein it satisfies the bound
$$|G(s; z)| \leq M(1+|\tau|)^{1-\delta}.$$

For complex numbers $z$ and $w$ and for positive real numbers $c_0, \delta$, and $M$ satisfying $\delta \leq 1$, we say that a Dirichlet series $F(s) := \sum_{n=1}^\infty a_n n^{-s}$ has \textsf{type $\mathcal T(z, w; c_0, \delta, M)$} if it has property $\mathcal P(z; c_0, \delta, M)$ and there exists a sequence $\{b_n\}_{n=1}^\infty$ of nonnegative real numbers upper bounding the sizes of $\{a_n\}_{n=1}^\infty$ termwise (that is, satisfying $|a_n| \leq b_n$ for all positive integers $n$), such that 
the Dirichlet series $\sum_{n=1}^\infty b_nn^{-s}$ has property $\mathcal P(w; c_0, \delta, M)$.

The following is a special case  of Theorem A.13 in \cite{CM20}. Specifically, we take $A=1, N=0, \delta=1/2$ in that result.

\begin{prop}\label{prop:CM} Let $z,w$ be complex numbers with $|z|, |w|\le 1$. Let $c_0, M$ be positive real numbers with $c_0 \le 2/11$. Let $F(s)= \sum_{n=1}^{\infty} a_n/n^s$ be a Dirichlet series of type $\mathcal T(z, w; c_0, 1/2, M)$. Then, uniformly for $x\ge \exp(16/c_0)$,
we have 
\[ \sum_{n \le x} a_n = x (\log{x})^{z-1}\left(\frac{G(1;z)}{\Gamma(z)} + O(M R(x))\right),\]
where
\[ R(x) = c_0^{-3} \exp\left(-\frac{1}{6}\sqrt{\frac{1}{2}c_0\log{x}}\right) + \frac{1}{c_0\log{x}}. \]
\end{prop}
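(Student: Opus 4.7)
The plan is straightforward: Proposition~\ref{prop:CM} is obtained from Theorem~A.13 of \cite{CM20} by specialization. Chang--Martin's theorem delivers an asymptotic expansion with $N+1$ main terms and works for any polynomial growth exponent $\delta\in(0,1]$; we simply plug in $N=0$ (keeping only the leading term), $\delta=1/2$ (matching our hypothesis), and $A=1$ (absorbing absolute constants into the implied $O$-constant). The hypothesis $c_0\le 2/11$ and $x\ge \exp(16/c_0)$ come in to guarantee that the various parameter-dependent restrictions in \cite{CM20} are automatic, so that the stated asymptotic with error term $MR(x)$ holds verbatim.

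For orientation, the underlying proof in \cite{CM20} follows the classical Landau--Selberg--Delange scheme as presented in \cite[Chapter~II.5]{tenenbaum15}. The starting point is a truncated Perron formula
\[ \sum_{n\le x}a_n = \frac{1}{2\pi i}\int_{\kappa-iT}^{\kappa+iT} F(s)\frac{x^s}{s}\,ds + (\text{tail error}), \]
with $\kappa = 1 + 1/\log x$. One deforms the vertical segment into the truncated Hankel-type contour that hugs the curve $\sigma = 1 - c_0/(1+\log^{+}|\tau|)$ on its sides and loops around the branch point at $s=1$. On the deformed contour one uses the factorization $F(s) = \zeta(s)^{z}G(s;z)$: the hypothesis $\mathcal P(z;c_0,1/2,M)$ controls $G(s;z)$, while the standard zero-free region and Vinogradov-type bounds for $\zeta(s)$ control $\zeta(s)^z$ in the strip.

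The main contribution comes from the Hankel loop at $s=1$. There one replaces $\zeta(s)^z$ by $(s-1)^{-z}\cdot(s-1)^z\zeta(s)^z$, writes the regular factor as its Taylor expansion about $s=1$, and uses Hankel's integral for $1/\Gamma(z)$ to evaluate the loop integral. Keeping only the zeroth Taylor coefficient $G(1;z)$ yields the main term $x(\log x)^{z-1}G(1;z)/\Gamma(z)$; higher-order coefficients would give the $N\ge 1$ refinements not needed here. The auxiliary parameter $w$ and the dominating series $\sum b_n n^{-s}$ enter only to justify Perron's formula and to bound the series on the $1$-line, which is why the hypotheses involve a bound of the \emph{same} shape $\mathcal P(w;c_0,1/2,M)$ on the majorant.

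The real technical work, which I would not attempt to redo, is tracking uniformity in $c_0$ and $M$ through every contour piece and through the Hankel integral to produce the explicit $R(x)=c_0^{-3}\exp(-\tfrac16\sqrt{c_0\log x/2}) + 1/(c_0\log x)$. This is precisely the bookkeeping that Chang and Martin carry out in \cite{CM20}, and it is the reason we cite their version rather than the qualitative statement in \cite{tenenbaum15}.
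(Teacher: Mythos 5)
Your proposal is correct and takes exactly the same route as the paper: Proposition~\ref{prop:CM} is cited directly as the $A=1$, $N=0$, $\delta=1/2$ specialization of Theorem~A.13 of \cite{CM20}, with the underlying Landau--Selberg--Delange contour argument left to that reference. The only detail you omit is the paper's remark that the expression for $R(x)$ as printed in \cite{CM20} contains typographical errors (an extraneous factor of $M$ throughout, and an extraneous factor of $x$ in the first term), which the statement here silently corrects.
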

Here we have corrected some typos in \cite{CM20}; the expression for $R(x)$ there has an extra factor of $M$ throughout as well as an extra factor of $x$ in its first term.

\subsection{Proof of Lemma \ref{lem:SDs}} We prove Lemma \ref{lem:SDs} in detail; after that, it will suffice to sketch the (very similar) proof of Lemma \ref{lem:SDphi}. 

We will assume throughout the argument that $p\ge 3$. We do \emph{not} assume to start with that $p\to\infty$ or that $p$ and $x$ are related in size in a particular way; those assumptions of Lemma \ref{lem:SDs} will be introduced only at the conclusion of the argument.

By the orthogonality relations,
\begin{equation}\label{eq:orthocount} \sum_{\substack{n \le x \\ n \equiv u\pmod{p} \\ \sigma(n)\equiv v\pmod{p}}} 1 = \frac{1}{(p-1)^2} \sum_{\chi, \psi} \bar{\chi}{(u)} \bar{\psi}(v) \sum_{n \le x}\chi(n)\psi(\sigma(n)), \end{equation}
where the first right-hand sum is over all Dirichlet characters $\chi, \psi$ mod $p$. Let $\epsilon$ denote the trivial character mod $p$. Then
\[ \sum_{n\le x} \epsilon(n)\epsilon(\sigma(n)) = \sum_{\substack{n \le x \\ p \nmid n \\ p \nmid \sigma(n)}} 1, \]
whose behavior will be understood with Lemma \ref{lem:sigmaasymp}.  Now assume that $(\chi,\psi) \ne (\epsilon,\epsilon)$. In this case, we will estimate $\sum_{n\le x} \chi(n) \psi(\sigma(n))$ by an application of Proposition \ref{prop:CM}. 

Let
\[ F_{\chi,\psi}(s) = \sum_{n=1}^{\infty} \frac{\chi(n)\psi(\sigma(n))}{n^s}. \]
In the half plane $\Re(s)>1$,
\[ F_{\chi,\psi}(s) = \prod_{q} \left(1 + \frac{\chi(q)\psi(q+1)}{q^s} + \frac{\chi(q^2)\psi(q^2+q+1)}{q^{2s}} + \dots \right). \]
We can choose coefficients $a_{\rho}$, for each Dirichlet character $\rho$ mod $p$, in such a way that
\begin{equation}\label{eq:arhopurpose} \chi(n)\psi(n+1) = \sum_{\rho} a_{\rho} \rho(n) \end{equation}
for all $n$. Indeed, it is straightforward to check that this holds if we set
\begin{equation*} a_{\rho} = \frac{1}{p-1} \sum_{m \bmod{p}} (\chi\bar{\rho})(m) \psi(m+1). \end{equation*}

The sum on $m$ used to define $a_{\rho}$ has $p-2$ nonzero terms, and so trivially $|a_{\rho}| <1$. In fact, unless $\psi$ is trivial and $\rho=\chi$, we have $$|a_{\rho}|\le \sqrt{p}/(p-1).$$ This follows by recognizing $(p-1)a_{\rho}$ as --- up to sign --- a \textsf{Jacobi sum}.\footnote{In the theory of Jacobi sums, it is common to set $\epsilon(0)=1$. We are following instead the usual convention for Dirichlet characters according to which $\epsilon(0)=0$.}  See Theorem 1 on p.\ 93 and the Corollary on p.\ 94 of \cite{IR90}. This bound on $a_{\rho}$ can also be viewed as a consequence of  Weil's Riemann Hypothesis for curves (see, e.g., \cite[Corollary 2.3]{wan97} for a general character sum estimate along these lines).

We will show that $F_{\chi,\psi}(s)$ has property $\mathcal{P}(a_{\epsilon}; c_0, 1/2, M)$ for certain values $c_0 \le 2/11$ and $M \ge 1$. Since the coefficients of $F$ are termwise dominated by those of $\zeta(s)$, which has property $\mathcal{P}(1; c_0,1/2,M)$, it follows that $F_{\chi,\psi}(s)$ has type $\mathcal{T}(a_\epsilon, 1; c_0, 1/2,M)$. After obtaining estimates for $c_0$ and $M$, Proposition \ref{prop:CM} will yield a satisfactory estimate for $\sum_{n\le x} \chi(n) \psi(\sigma(n))$.

We set
\[ U_{\chi,\psi}(s) = F_{\chi,\psi}(s) \prod_{\rho} L(s,\rho)^{-a_{\rho}} \]
and observe that for $\Re(s)>1$,
\begin{equation*}
U_{\chi,\psi}(s) = \prod_{q} \left(\left(1 + \frac{\chi(q)\psi(q+1)}{q^s} + \frac{\chi(q^2)\psi(q^2+q+1)}{q^{2s}} + \dots \right)\prod_{\rho} \left(1-\frac{\rho(q)}{q^s}\right)^{a_{\rho}}\right). \end{equation*}
Notice that
\begin{multline*} \left(1 + \frac{\chi(q)\psi(q+1)}{q^s} + \frac{\chi(q^2)\psi(q^2+q+1)}{q^{2s}} + \dots \right)\left(1-\frac{\chi(q)\psi(q+1)}{q^s}\right)\\
= 1+ c_{2}/q^{2s} + c_3/q^{3s}+ \dots,
\end{multline*}
where the $c_j = c_j(q,\chi,\psi)$ are at most $2$ in absolute value. It follows that the function
\[ V_{\chi,\psi}(s) := \prod_{q} \left(\left(1 + \frac{\chi(q)\psi(q+1)}{q^s} + \frac{\chi(q^2)\psi(q^2+q+1)}{q^{2s}} + \dots \right)\left(1-\frac{\chi(q)\psi(q+1)}{q^s}\right)\right) \]
is holomorphic and bounded by an absolute constant for $\Re(s) \ge 0.99$ (say). 
For $\Re(s)>1$, 
\[ U_{\chi,\psi}(s) = 
V_{\chi,\psi}(s)  W_{\chi,\psi}(s), \]
where
\[ W_{\chi,\psi}(s) :=\prod_{q} \left(\bigg( \prod_{\rho} \bigg(1-\frac{\rho(q)}{q^s}\bigg)^{a_\rho}\bigg) \bigg(1-\frac{\chi(q)\psi(q+1)}{q^s}\bigg)^{-1}\right). \]
Recalling that the $a_{\rho}$ were selected to ensure \eqref{eq:arhopurpose}, we find that 
\[ \log W_{\chi,\psi}(s) = \sum_{q} \sum_{k\ge 2} \left(\frac{\chi(q)^k\psi(q+1)^k - \sum_{\rho} a_{\rho} \rho(q)^k}{kq^{ks}} \right). \]
This is holomorphic for $\Re(s) \ge 0.99$ and in this region we have
\[ |\log W_{\chi,\psi}(s)| \ll  1 + \sum_{\rho} |a_{\rho}|. \]
Moreover, since $|a_{\rho}|\le \sqrt{p}/(p-1)$ for all $\rho$, with at most one exception where $|a_{\rho}| < 1$, 
\begin{equation}\label{eq:arhosumbound} \sum_{\rho} |a_{\rho}| \ll \sqrt{p}. \end{equation}
We conclude that $U_{\chi,\psi}(s)$ is holomorphic for $\Re(s)\ge 0.99$ and that $|U_{\chi,\psi}(s)| \le \exp(O(\sqrt{p}))$ there.

Now
\begin{align*} F_{\chi,\psi}(s) &= U_{\chi,\psi}(s) \prod_{\rho} L(s,\rho)^{a_{\rho}}\\
&= \zeta(s)^{a_{\epsilon}} (1-1/p^s)^{a_{\epsilon}} U_{\chi,\psi}(s)\prod_{\rho \ne \epsilon} L(s,\rho)^{a_{\rho}}\\
&= \zeta(s)^{a_{\epsilon}} G_{\chi,\psi}(s),\qquad\text{where}\qquad G_{\chi,\psi}(s) := (1-1/p^s)^{a_{\epsilon}} U_{\chi,\psi}(s)\prod_{\rho \ne \epsilon} L(s,\rho)^{a_{\rho}}.
\end{align*}
The factor $(1-1/p^s)^{a_{\epsilon}}$ is holomorphic and absolutely bounded for $\Re(s) \ge 0.99$. It remains to understand the behavior of $\prod_{\rho \ne \epsilon} L(s,\rho)^{a_{\rho}}$. For this, we appeal to  \cite[Proposition 2.3]{CM20}. Below, $\log\log^{+}$ denotes the second iterate of $\log^{+}$.

\begin{lem}\label{lem:dircharbound} Let $m$ be an integer at least $3$. There is an effective constant $0 < \eta < 1/81$ such that for all $m\ge 3$ and all Dirichlet characters $\xi$ mod $m$, the function $L(s,\xi)$ has no zeros in the region 
\[ \sigma \ge 1-\frac{c_0}{1+\log^{+}|\tau|} \quad \text{with}\quad c_0 = \frac{\eta}{m^{1/2} (\log{m})^2} \]
and therein satisfies the bound
\[ |\log L(s,\xi)| \le 
\begin{cases}
\log\log^{+}(m|\tau|) + O(1) &\text{if $L(s,\xi)$ has no exceptional zero},\\
\frac{1}{2}\log{m} + 3\log\log^{+}(m|\tau|) + O(1)&\text{if $L(s,\xi)$ has an exceptional zero}.
\end{cases}\]
\end{lem}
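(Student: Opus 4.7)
The plan is to combine three classical ingredients: the Hadamard--de la Vall\'ee Poussin zero-free region argument, the effective bound on the exceptional (Landau--Page) zero, and the Borel--Carath\'eodory theorem to convert a zero-free disk into a bound on $|\log L(s,\xi)|$. The narrow width $c_0 \asymp 1/(\sqrt{m}(\log m)^2)$ and the bifurcated conclusion are both forced by how the potential exceptional zero is handled.

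First I would establish the zero-free region. Applying the positivity identity $3+4\cos\theta+\cos 2\theta \ge 0$ to the triple $\log L(s,\xi_0)$, $\log L(s,\xi)$, $\log L(s,\xi^2)$ and exploiting the simple pole of $\zeta$ at $s=1$, one obtains the standard de la Vall\'ee Poussin region $\sigma > 1-\tilde c/\log(m(|\tau|+3))$ in which $L(s,\xi)$ has no zeros, except possibly for a single real simple zero $\beta$ corresponding to at most one real non-principal $\xi$ mod $m$ (Page's uniqueness theorem). The classical effective lower bound $1-\beta \gg 1/(\sqrt{m}(\log m)^2)$ then allows us to choose $\eta$ small in $c_0 = \eta/(m^{1/2}(\log m)^2)$ so that the region of the lemma lies strictly to the left of $\beta$, and is therefore zero-free.

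For the bound on $|\log L(s,\xi)|$, I would start from a safe point $s_0 = 1+1/\log m + i\tau$, where Euler product estimates give $|\log L(s_0,\xi)| = O(1)$, and move to the target $s$ by integrating $L'/L$ along a short path that stays inside the zero-free region. Writing $L'/L$ via the Hadamard product, the contribution of non-exceptional zeros is controlled using the zero-counting bound $N(T,\xi) \ll \log(m(|T|+2))$ together with the distance estimate $|s-\rho| \gg 1/\log(m(|\tau|+2))$ guaranteed by the de la Vall\'ee Poussin region; this yields the $\log\log^{+}(m|\tau|) + O(1)$ bound of the first case. When $\beta$ is present, its contribution is isolated as a term of the form $\log(s-\beta)$; since $|s-\beta| \ge c_0/(1+\log^{+}|\tau|) \gg 1/(\sqrt{m}(\log m)^3)$, we obtain $|\log(s-\beta)| \le \tfrac{1}{2}\log m + 3\log\log^{+} m + O(1)$, matching the second case.

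The main obstacle is not any single step but rather the uniform bookkeeping in $m$ and $\tau$: one must track the dependence of every implicit constant through the Hadamard product manipulations, the Borel--Carath\'eodory estimates, and the zero-counting bounds. The qualitative shape of the answer is essentially forced by the $\sqrt{m}$ in Page's bound --- any unconditional improvement to the exponent of $m$ in $c_0$ would require new input on the exceptional zero, whereas on GRH one could take $c_0$ nearly as large as the classical width $1/\log(m(|\tau|+2))$.
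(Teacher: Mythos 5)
The paper does not actually supply a proof of this lemma; it is quoted from Chang and Martin's Proposition 2.3 in [CM20], and the reader is referred there. Your sketch reconstructs the standard machinery underlying that citation---the de la Vall\'ee Poussin positivity argument, Page's uniqueness and effective lower bound $1-\beta \gg m^{-1/2}(\log m)^{-2}$ for the exceptional zero, and Hadamard-product/Borel--Carath\'eodory estimates to pass from the zero-free region to a bound on $|\log L|$---and this is indeed the architecture of the Chang--Martin proof, so your proposal takes essentially the same route. A couple of points deserve tightening. The inequality $|s-\beta|\ge c_0/(1+\log^{+}|\tau|)$ does not follow from the shape of the region alone: one must combine Page's bound $1-\beta \ge c_1 m^{-1/2}(\log m)^{-2}$ with a choice $\eta \le c_1/2$ to get $\Re(s)-\beta \ge (c_1-\eta)m^{-1/2}(\log m)^{-2}$, uniformly in $\tau$; with this in hand the exponent in your lower bound on $|s-\beta|$ should be $(\log m)^{2}$, not $(\log m)^{3}$, and the resulting $|\log(s-\beta)| \le \tfrac12\log m + 2\log\log m + O(1)$, with the $\tau$-dependence then folded into $\log\log^{+}(m|\tau|)$ for $|\tau|\ge 1$ to produce the stated second case. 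These are bookkeeping corrections, not gaps, and your identification of the $\sqrt{m}$ as coming unavoidably from Page's bound is exactly the right observation.
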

We do not define ``exceptional zero'' here (see \cite{CM20}). It suffices for present purposes to note that for each $m$, there is at most one character $\xi$ mod $m$ for which $L(s,\xi)$ has an exceptional zero.

We take $$c_0 := \frac{\eta}{ p^{1/2}(\log{p})^{2}},$$ where $\eta$ is as in Lemma \ref{lem:dircharbound}. Then the product $\prod_{\rho \ne \epsilon} L(s,\rho)^{a_{\rho}}$ is nonzero and holomorphic for $\sigma \ge 1-c_0/(1+\log^{+}|\tau|)$, and in this same region, 
\begin{align*} |\log \prod_{\rho \ne \epsilon} L(s,\rho)^{a_{\rho}}|&\ll \log{p} + \sqrt{p} \log\log^{+}(p|\tau|) + O(\sqrt{p}) \\
&\ll \sqrt{p}(\log\log^{+}(p|\tau|) + 1).\end{align*}
(Here we used \eqref{eq:arhosumbound} and that at most one $\rho$ is exceptional.) 
Hence,
\[ \left|\prod_{\rho \ne \epsilon} L(s,\rho)^{a_{\rho}}\right| \le \exp(O(\sqrt{p})) \exp(O( \sqrt{p}\log\log^{+}(p|\tau|))). \]
It is a calculus exercise  to show that the right-hand side is at most $(C\sqrt{p})^{C\sqrt{p}} (1+|\tau|)^{1/2}$  for a certain absolute constant $C$. (Compare with the proof of \cite[Lemma 3.3]{CM20}.) 

Assembling our results, we find that $G_{\chi,\psi}(s)$ is holomorphic for $\sigma \ge 1-c_0/(1+\log^{+}|\tau|)$ and therein satifies 
\begin{equation}\label{eq:gchipsibound} |G_{\chi,\psi}(s)| \le M (1+|\tau|)^{1/2} \end{equation}
where $$ M:= (C'\sqrt{p})^{C'\sqrt{p}} $$ for a certain constant $C'$. Hence, $F_{\chi,\psi}(s)$ has property
$\mathcal{P}(a_{\epsilon}; c_0, 1/2, M)$.

From Proposition \ref{prop:CM}, we deduce that for all $x\ge \exp(\frac{16}{\eta} p^{1/2} (\log{p})^2)$, 
\[ \sum_{n \le x}\chi(n)\psi(\sigma(n)) = x(\log{x})^{a_\epsilon-1} \left(\frac{G_{\chi,\psi}(1)}{\Gamma(a_{\epsilon})} + O\left(M R(x)\right)\right).  \]

Since $(\chi,\psi)\ne (\epsilon,\epsilon)$, we have $|a_{\epsilon}| \le \sqrt{p}/(p-1)$. As $a_{\epsilon}$ is close to zero, $|1/\Gamma(a_{\epsilon})| \ll |a_{\epsilon}| \ll p^{-1/2}$. From \eqref{eq:gchipsibound}, $G_{\chi,\psi}(1) \ll M$. We have crudely that $R(x) \ll c_0^{-3} \ll p^2$. If we assume that $p> 10$, then $|a_{\epsilon}| < 2/5$, and we conclude that
\[ \bigg|\sum_{n\le x} \chi(n)\psi(\sigma(n))\bigg| \le x(\log{x})^{-3/5} \exp(O(\sqrt{p}\log p)). \]

Referring back to \eqref{eq:orthocount}, it is now straightforward to complete the proof of Lemma \ref{lem:SDphi}. We are assuming in Lemma \ref{lem:SDphi} that $x,p \to\infty$ with $p \le (\log_2{x})^{2-\delta}$. Under these assumptions, we certainly have (for large $x,p$) that $x\ge \exp(\frac{16}{\eta} p^{1/2} (\log{p})^2)$. Moreover (for large $x,p$), 
\[ \bigg|\sum_{n\le x} \chi(n)\psi(\sigma(n))\bigg| \le x(\log{x})^{-1/2}. \]
Therefore,
\[ \Bigg|\frac{1}{(p-1)^2} \sum_{\substack{\chi, \psi \\ (\chi,\psi)\ne (\epsilon,\epsilon)}} \bar{\chi}{(u)} \bar{\psi}(v) \sum_{n \le x}\chi(n)\psi(\sigma(n))\Bigg| \le x(\log{x})^{-1/2}. \]
On the other hand, Lemma \ref{lem:sigmaasymp} implies that
\[ \frac{1}{(p-1)^2}\sum_{n\le x} \epsilon(n)\epsilon(\sigma(n)) \sim \frac{1}{p^2} \frac{x}{(\log{x})^{1/(p-1)}}. \]
In this range, 
\[ \frac{x}{(\log{x})^{1/2}} = o\left(\frac{1}{p^2} \frac{x}{(\log{x})^{1/(p-1)}}\right). \]
We conclude that the number of $n\le x$ with $n\equiv u\pmod{p}$ and $\sigma(n)\equiv v\pmod{p}$ is $(1+o(1))\frac{x}{p^2 (\log{x})^{1/(p-1)}}$, as desired.

\section{Proof of Lemma \ref{lem:SDphi} (sketch)} The proof is similar to, but slightly simpler than, the above proof of Lemma \ref{lem:SDs}. We start by writing
\[ \sum_{\substack{n \le x \\ \phi(n)\equiv a\pmod{p}}} 1 = \frac{1}{p-1}\sum_{\chi} \bar\chi(a) \sum_{n \le x} \chi(\phi(n)). \]
For nontrivial $\chi$, we let $F_{\chi}(s) = \sum_{n=1}^{\infty} \chi
(\phi(n))n^{-s}$, and we define
\[ U_{\chi}(s) = F_{\chi}(s) \prod_{\rho} L(s,\rho)^{-a_{\rho}}, \]
where now each 
\[ a_{\rho} = \frac{1}{p-1} \sum_{m\bmod{p}}\bar{\rho}(m) \chi(m-1). \]
Here the $a_{\rho}$ have been chosen so that, for all $n\not\equiv 0\pmod{p}$, 
\[ \chi(n-1) = \sum_{\rho} a_{\rho} \rho(n). \]
Then $a_{\epsilon}=-\chi(-1)/(p-1)$ and $|a_{\rho}| \le \sqrt{p}/(p-1)$ for all $\rho\ne \epsilon$. Proceeding as before, one checks that $U_{\chi}(s)$ is holomorphic for $\Re(s) \ge 0.99$ and, in this same region, bounded in absolute value by $\exp(O(\sqrt{p}))$. From this, one deduces that $F_{\chi}(s)$ has type $\mathcal{T}(-\frac{\chi(-1)}{p-1},1;c_0,1/2,M)$ for $c_0:=\eta/(p^{1/2} (\log{p})^2)$, with $\eta$ as in Lemma \ref{lem:dircharbound}, and $M := (C\sqrt{p})^{C\sqrt{p}}$ for a certain absolute constant $C$. The rest of the argument is as above, using Lemma \ref{lem:phiasymp} in place of Lemma \ref{lem:sigmaasymp} at the appropriate spot.

\providecommand{\bysame}{\leavevmode\hbox to3em{\hrulefill}\thinspace}
\providecommand{\MR}{\relax\ifhmode\unskip\space\fi MR }
\providecommand{\MRhref}[2]{%
  \href{http://www.ams.org/mathscinet-getitem?mr=#1}{#2}
}
\providecommand{\href}[2]{#2}

\end{document}